\newcounter{cnt1}
\newcounter{cnt2}
\newcommand{\blr}{\begin{list}{$(\roman{cnt1})$}
		{\usecounter{cnt1} \setlength{\topsep}{0pt}
			\setlength{\itemsep}{0pt}}}
	\newcommand{\bla}{\begin{list}{$($\alph{cnt2}$)$}
			{\usecounter{cnt2} \setlength{\topsep}{0pt}
				\setlength{\itemsep}{0pt}}}
		\newcommand{\el}{\end{list}}
	\newtheorem{thm}{Theorem}
	\newtheorem{cor}[thm]{Corollary}
	\newtheorem{ex}[thm]{Example}
\newtheorem{Def}[thm]{Definition}
		\newtheorem{prop}[thm]{Proposition}
		\newtheorem{rem}[thm]{Remark}
		\newcommand{\Rem}{\begin{rem} \rm}
			\newcommand{\bdfn}{\begin{Def} \rm}
				\newcommand{\edfn}{\end{Def}}
\begin{document}
				\large
				\title[The Existence of Linear Selection and the Quotient Lifting Property  ]{The  Existence of Linear Selection and the Quotient Lifting Property }
				
				\author[Monika .]{Monika .}
				\author[Fernanda Botelho]{Fernanda Botelho}
				\author[Richard Fleming]{Richard Fleming}
				\address{Department of Mathematical Sciences \\ The University of Memphis \\ Memphis, TN 38152-3240, United States} 
				
				\email{myadav@memphis.edu, mbotelho@memphis.edu}
				\address{ Central Michigan University    }
				\email{flemi1rj@cmich.edu  }
				\subjclass[2000]{Primary 47L05, 46B28, 46B25  \\
					\noindent
\textit{Keywords and phrases.} Lifting property; Quotient lifting property; Metric projection; Metric complement; Proximinal subspaces; Linear selection}

\maketitle
\vspace{-1cm}
\begin{abstract} Lifting properties for Banach spaces are studied. An alternate version of the lifting property due to  Lindenstrass and  Tzafriri is proposed and a characterization, up to isomorphism, is given. The quotient lifting property for pairs of Banach spaces $(X,J)$, with $J$ proximinal in $X$,  is considered and several conditions for the property to hold are given. \end{abstract}
\vspace{-5mm}
\section{Introduction}

 Following Lindenstrass and Tzafriri \cite{LT},  we say a Banach space $Y$ has the \emph{lifting property} (LP) if for every bounded operator $\psi$ from a Banach space $X$ onto a Banach space $W$ and for every $S\in \L(Y,W)$, there is $\hat{S} \in \L(Y,X)$ such that $S=\psi \circ \hat{S}$. 	We note that in \cite{LT} it is shown that $\ell_1$ has the LP, but that can easily be extended  to show that any space isomorphic to $\ell_1$ has the LP.  The converse for infinite dimensional separable spaces is given in \cite{LT}. 
   If we put the attention on $W$ rather than $Y$, we say $W$ has the \emph{alternate lifting property} (ALP), if for an operator $\psi$ from $X$ onto $W$ and $S\in \L(Y,W)$, there is $\hat{S} \in \L(Y,X)$ such that $S = \psi \circ \hat{S}$. From the ideas in \cite{LT} we  have the following theorem.\\
 
{\bf{Theorem.}} {\it Let $W$ be a Banach space. Then $W$ satisfies the ALP if and only if $W$ is isomorphic to $\ell_1(\Gamma)$, for a suitable  index set $\Gamma$.
	}\\

We  consider the case where $W$ is the quotient of $X$ by a closed subspace $J$ with $\psi = \pi$, where $\pi$ is  the quotient map.  We are interested in the existence of norm preserving lifts of a bounded  operator $S: Y \rightarrow X/J$. If, for every $Y$ and $S$,  such a lift exists then we say that the pair $(X, J)$ has  \textit{quotient lifting property} (QLP).
			
First, we give an alternate version of the QLP.  We say that $Y$ satisfies the \textit{alternate quotient lifting property} (AQLP)  if given a Banach space $X$ with closed subspace $J$  and bounded operator $S$ from $Y$ to $X/J$, there exists $\hat{S}\in \L(Y,X)$ such that $S = \pi\circ \hat{S}$.  We obtain a similar characterization for Banach spaces with the AQLP.\\

{\bf{Theorem.}} {\it If $Y$ is  Banach space then $Y$ satisfies the AQLP if and only if $Y$ is isomorphic to $\ell_1(\Gamma ),$ for a suitable  index set $\Gamma $.\\
	}
	
	We remark that we have not required $\|\hat{S}\| = \|S\|$  in the alternate versions as we do for the QLP. 
		
	In this paper we first study conditions on a Banach space that ensure the existence of  liftings for operators either into the space (ALP) or on the space (AQLP), and for which a diagram commutes. We also investigate conditions for the existence of norm  preserving lifts of operators  (QLP) and its  interconnections with  metric projections, linear selections and  proximinality.\par

We start by recalling some definitions to be used throughout.  For a closed subspace $J$ of $X$, we denote by  $\pi$ the canonical quotient map from $X$ to $X/J$, given by $\pi (x) =x+J$ ($\|\pi\|=1$ unless $J=X$). A subspace $J$ of a normed linear space $X$ is called \textit{proximinal} (resp. \textit{Chebyshev}) if for each $x$ in $X$, the set of best approximations to $x$ in $J$,
\begin{center}
	$P_J(x) := \{j \in J| \|x-j\|= dist(x,J)\}$
\end{center}
is nonempty (resp. a singleton). The set valued map $P_J$ is called the \textit{metric projection onto $J$}. For a proximinal subspace $J$, a \textit{selection} for $P_J $ is a function $p: X\to J$ with $p(x) \in P_J(x)$, for every $x$. If $p$ is linear we call it a \textit{linear selection}. The \textit{metric complement} of $J$ is defined to be $J_0=\{x \in X: \|x\|=dist(x, J)\}.$ We also recall that an M-ideal in a Banach space is a subspace for which the annihilator is an L-summand of the dual   space. For details we refer the reader to  Chapter 1  in \cite{hww}. It is a known fact that M-ideals are proximinal. 

In section 2 we prove  the theorems stated above. These theorems characterize spaces with the ALP and spaces with the AQLP, up to  isomorphism. We  draw some conclusions concerning subspaces  of Banach spaces with the  property. \par
In section 3  we show  that proximinality is a necessary condition for the QLP  and the metric complement being a subspace is sufficient for QLP.    \par
 In section 4  we study the relation between properties of the  metric complement of  $J$  and the   QLP for $(X,J)$. It is worth to mention  that the existence of a linear selection is an invariant condition under isometric isomorphisms.  From this fact we derive that  the QLP holds for several  pairs of  Banach spaces. Moreover,  for proximinal  subspaces, we show that the QLP is equivalent to the existence of a linear selection. \par
 In Section 5 we consider the QLP for  subspaces that are M-ideals. If $X$ is reflexive and $J$ is an $M$-ideal in $X$, then $(X,J)$ has the QLP. The same holds for $C([0,1]^n)$ and $J$ an $M$-ideal in $C([0,1]^n)$. The QLP does not hold, in general, for $(C(\Omega), J)$, with $\Omega$  a compact Hausdorff space and  $J$ an $M$-ideal  of  $C(\Omega )$. Nevertheless,  the QLP holds when  $\Omega$  is compact and metrizable.

\section{The Alternate  Lifting Properties}	

We define the following alternate lifting properties for Banach spaces.
\begin{Def} Let $W$ be a Banach space. Then,
\begin{itemize}
\item \label{ALP} $W$ has the ALP if and only if given  Banach spaces $X$ and $Y$,  an operator $\psi$ from $X$ onto $W$ and $S\in \L(Y,W)$, there is $\hat{S} \in \L(Y,X)$ such that $S = \psi \circ \hat{S}$.
\item \label{AQLP}$Y$ has the AQLP  if and only if for every  Banach space $X$, a closed subspace $J$  and a bounded operator $S$ from $Y$ to $X/J$, there exists $\hat{S}\in \L(Y,X)$ such that $S = \pi\circ \hat{S}$, with $\pi$ denoting the quotient map from $X$ onto $X/J$. 
\end{itemize}
\end{Def}

	\begin{thm} \label{ALP-up-to-isomorphism} Let  $W$ be a Banach space. Then $W$ satisfies the  ALP if and only if $W$ is isomorphic to $\ell_1(\Gamma )$, for some suitable index set $\Gamma $.
	\end{thm}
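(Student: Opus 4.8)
The plan is to prove both implications by routing everything through the lifting property of $\ell_1(\Gamma)$ and the structure of its complemented subspaces. For the ``if'' direction, assume $W$ is isomorphic to $\ell_1(\Gamma)$. I would first record the standard fact that $\ell_1(\Gamma)$ itself has the lifting property: if $\psi$ maps a Banach space $X$ onto $W$ and $S\in\L(Y,W)$, the open mapping theorem provides, for each $\gamma$, a vector $x_\gamma\in\psi^{-1}(Se_\gamma)$ with $\sup_\gamma\|x_\gamma\|<\infty$, and since $\sum_\gamma a_\gamma x_\gamma$ then converges absolutely for every $(a_\gamma)\in\ell_1(\Gamma)$, the assignment $e_\gamma\mapsto x_\gamma$ extends to a bounded operator $\hat S$ with $\psi\circ\hat S=S$. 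In particular, taking $W$ itself as target and $S$ the identity, every bounded surjection onto $W$ has a bounded right inverse. Now, given arbitrary $X,Y$, a surjection $\psi\colon X\to W$ and $S\in\L(Y,W)$, choose a right inverse $T\in\L(W,X)$ of $\psi$ and put $\hat S:=T\circ S$; then $\psi\circ\hat S=(\psi\circ T)\circ S=S$, so $W$ has the ALP.

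For the ``only if'' direction, assume $W$ has the ALP. Every Banach space is a quotient of some $\ell_1(\Gamma)$: take $\Gamma$ to be the closed unit ball of $W$ and let $\psi\colon\ell_1(\Gamma)\to W$ send $e_w$ to $w$, so $\psi$ carries the unit ball of $\ell_1(\Gamma)$ onto that of $W$. Apply the ALP to this $\psi$ with $Y=W$ and $S=\mathrm{id}_W$: there is $\hat S\in\L(W,\ell_1(\Gamma))$ with $\psi\circ\hat S=\mathrm{id}_W$. Then $\hat S$ is an isomorphism of $W$ onto its range, and $P:=\hat S\circ\psi$ is idempotent, since $P\circ P=\hat S\circ(\psi\circ\hat S)\circ\psi=\hat S\circ\psi=P$; thus $P$ is a bounded projection of $\ell_1(\Gamma)$ onto the copy $\hat S(W)$ of $W$, and $W$ is isomorphic to a complemented subspace of $\ell_1(\Gamma)$.

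To finish, I would invoke the structure theorem for complemented subspaces of $\ell_1(\Gamma)$ (due to K\"othe in general; for countable $\Gamma$ it follows from Pe\l czy\'nski's decomposition method and belongs to the circle of ideas in \cite{LT}): a complemented subspace of $\ell_1(\Gamma)$ is either finite dimensional or isomorphic to $\ell_1(\Gamma')$ for some $\Gamma'$ with $|\Gamma'|\le|\Gamma|$. Since every $n$-dimensional space is isomorphic to $\ell_1(\Gamma)$ with $|\Gamma|=n$, in all cases $W$ is isomorphic to $\ell_1(\Gamma')$ for a suitable index set $\Gamma'$, which completes the proof. The only ingredient that is not a short diagram chase is this last structure theorem, and that is where I expect the main difficulty---or at least the reliance on an external result---to lie; if a fully self-contained argument were wanted, I would supply Pe\l czy\'nski's decomposition for the separable case and cite K\"othe for the general one.
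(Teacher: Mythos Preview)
Your proposal is correct and follows essentially the same route as the paper: for the ``only if'' direction you realize $W$ as a quotient of $\ell_1(\Gamma)$, lift the identity to exhibit $W$ as a complemented subspace, and then invoke K\"othe's extension of the Lindenstrauss--Tzafriri result on complemented subspaces of $\ell_1(\Gamma)$---exactly as the paper does. For the ``if'' direction the paper constructs the lift directly from the basis $\{w_\gamma\}$ of $W$, whereas you first extract a bounded right inverse $T$ of $\psi$ (by lifting $\mathrm{id}_W$) and then set $\hat S=T\circ S$; this is the same construction repackaged, since your $T$ is precisely the paper's map $w_\gamma\mapsto x_\gamma$ extended by linearity.
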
	
	\begin{proof} The statement is straightforward for finite dimensional spaces. We present the proof for the  infinite dimensional case. We recall that any Banach space is the quotient of  $\ell_1(\Gamma)$ for some suitable index set $\Gamma$ (see \cite{R}  page 21), and therefore is the range of a bounded operator on $\ell_1(\Gamma)$. Let $W$ satisfy  the ALP and in that definition, let $X = \ell_1(\Gamma )$, $\psi$ a bounded operator from $X$ onto $W$,  $Y = W$, and $S$ the identity operator on $W$, denoted by $Id$.   Then by the  ALP, there exists $\hat{S}$ from $W$ to $\ell_1(\Gamma)$ such that 
	\[Id = S = \psi \circ \hat{S}.\]
	It is easy to see that $\hat{S}\circ \psi$ is a bounded projection onto a subspace of $\ell_1(\Gamma)$.  It  follows from the Koethe extension of Theorem 2.a.3 in \cite[p. 108]{LT} that W is isomorphic to $\ell_1 (\Gamma )$. 
	 
	 On the other hand, suppose $T$ is an isomorphism from $\ell_1(\Gamma )$ onto $W$.  Let $w_{\gamma} = Te_{\gamma}$, where $\{e_{\gamma}\}$ denotes the standard family  of functions from $\Gamma$ into the scalar field such that  $e_{\gamma} (\alpha)= 1$ for $\gamma = \alpha,$ and $0$ otherwise. Thus if $w\in W$, there exists an absolutely summable family $\{\alpha_{\gamma}\}$ of scalars such that $w= \sum_{\gamma \in \Gamma} \alpha_{\gamma} w_{\gamma}$.  There exists $x_{\gamma} \in X$ such that $\psi(x_{\gamma}) = w_{\gamma}$ and since $\psi$ is surjective and open, the family $\{x_{\gamma}\}$ must be bounded.  For $y\in Y$ there is an absolutely summable family  of scalars $\{\alpha_{\gamma}\}$ such that $S(y) = \sum_{\gamma \in \Gamma} \alpha_{\gamma} w_{\gamma}$.  Since the series $\sum_{\gamma \in \Gamma} \alpha_{\gamma}  x_{\gamma}$ is absolutely summable, it must converge to some $x\in X$.  We define $\hat{S} (y) = x$ and it follows that $\hat{S}$ is a bounded linear operator from $Y$ to $X$ such that $S = \psi \circ \hat{S}$.  
	\end{proof}

Similar considerations apply to  the  AQLP  (see Definition \ref{AQLP})  to   prove the  characterization for  Banach spaces with the AQLP.  
		\begin{thm} \label{a}  Let $Y$ be a  Banach space.  Then $Y$ satisfies the AQLP if and only if $Y$ is isomorphic to $\ell_1(\Gamma )$.
		\end{thm}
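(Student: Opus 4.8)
The plan is to mirror the proof of Theorem \ref{ALP-up-to-isomorphism}, exploiting the fact that $X/J$ ranges over \emph{all} Banach spaces as $(X,J)$ ranges over all pairs, and that $\ell_1(\Gamma)$ is simultaneously a universal quotient object and (by the Koethe/Lindenstrauss--Tzafriri argument) rigid under complemented embeddings. So the proof splits into two implications, and the forward direction is the substantive one.

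For the forward direction, suppose $Y$ has the AQLP. Write $Y$ as a quotient: there is an index set $\Gamma$ and a quotient map $q\colon \ell_1(\Gamma)\twoheadrightarrow Y$ with kernel $J := \ker q$, so that $\ell_1(\Gamma)/J$ is isometrically isomorphic to $Y$; call this isometry $u\colon \ell_1(\Gamma)/J \to Y$ and note $\pi = u^{-1}\circ(\text{the canonical factoring of }q)$, i.e. $q = u\circ\pi$ where $\pi\colon \ell_1(\Gamma)\to \ell_1(\Gamma)/J$ is the quotient map. Now apply the AQLP with this $X=\ell_1(\Gamma)$, this $J$, and the operator $S := u^{-1}\colon Y \to \ell_1(\Gamma)/J$ (bounded, in fact an isometry). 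The AQLP produces $\hat S\in\L(Y,\ell_1(\Gamma))$ with $u^{-1} = \pi\circ\hat S$, equivalently $q\circ\hat S = u\circ\pi\circ\hat S = \mathrm{Id}_Y$. Thus $\hat S$ is a bounded right inverse of $q$, so $\hat S\circ q$ is a bounded linear projection of $\ell_1(\Gamma)$ onto the closed subspace $\hat S(Y)$, which is isomorphic to $Y$. By the Koethe extension of Theorem 2.a.3 in \cite[p.~108]{LT} (a complemented subspace of $\ell_1(\Gamma)$ is isomorphic to $\ell_1(\Lambda)$ for some $\Lambda$), $Y$ is isomorphic to $\ell_1(\Gamma')$ for a suitable $\Gamma'$. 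The finite-dimensional case is immediate and handled separately.

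For the converse, suppose $T\colon \ell_1(\Gamma)\to Y$ is an isomorphism onto $Y$, and let $X$ be any Banach space with closed subspace $J$ and $S\in\L(Y, X/J)$ bounded. Put $w_\gamma := T e_\gamma$, where $\{e_\gamma\}$ is the standard basis of $\ell_1(\Gamma)$; every $y\in Y$ has a unique expansion $y = \sum_\gamma \alpha_\gamma w_\gamma$ with $\sum_\gamma|\alpha_\gamma| = \|T^{-1}y\|_1 \le \|T^{-1}\|\,\|y\|$. Since $\pi\colon X\to X/J$ is a metric surjection (open map), for each $\gamma$ choose $x_\gamma\in X$ with $\pi(x_\gamma) = S(w_\gamma)$ and $\|x_\gamma\| \le 2\|S(w_\gamma)\| \le 2\|S\|\,\|w_\gamma\| \le 2\|S\|\,\|T\|$, a uniform bound $M$. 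For $y = \sum_\gamma\alpha_\gamma w_\gamma\in Y$ the family $\{\alpha_\gamma x_\gamma\}$ is absolutely summable in $X$ (norms bounded by $M|\alpha_\gamma|$), hence $\hat S(y) := \sum_\gamma \alpha_\gamma x_\gamma$ converges in $X$; linearity is clear, $\|\hat S(y)\|\le M\|T^{-1}\|\,\|y\|$ gives boundedness, and by continuity of $\pi$ one gets $\pi(\hat S(y)) = \sum_\gamma\alpha_\gamma\pi(x_\gamma) = \sum_\gamma\alpha_\gamma S(w_\gamma) = S(y)$, so $S = \pi\circ\hat S$.

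The main obstacle, as in Theorem \ref{ALP-up-to-isomorphism}, is the forward direction, and specifically the invocation of the Koethe extension of Theorem 2.a.3 of \cite{LT}: one must know that a complemented subspace of $\ell_1(\Gamma)$ for an uncountable $\Gamma$ is again of the form $\ell_1(\Lambda)$. Everything else is a matter of bookkeeping — the identification $q = u\circ\pi$ so that a lift of $u^{-1}$ becomes a right inverse of $q$, and in the converse the uniform boundedness of the $\{x_\gamma\}$, which here comes directly from openness of $\pi$ rather than from a general surjection as in the ALP case. I would also note that, as the authors remark, no norm control on $\hat S$ is claimed, which is exactly what makes the $\ell_1(\Gamma)$ answer clean; the isometric QLP is genuinely more delicate and is deferred to later sections.
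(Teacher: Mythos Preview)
Your proof is correct and follows essentially the same approach as the paper: the forward direction applies AQLP to the isometry $Y\to \ell_1(\Gamma)/J$ to exhibit $Y$ as a complemented subspace of $\ell_1(\Gamma)$ and then invokes the Koethe extension of \cite[Theorem~2.a.3]{LT}, and the converse repeats the $\ell_1(\Gamma)$-basis lifting argument from Theorem~\ref{ALP-up-to-isomorphism}. The paper's version is terser (it simply names the projection $\hat S\circ S^{-1}\circ\pi$ and defers everything else to the proof of Theorem~\ref{ALP-up-to-isomorphism}), but your added detail --- the explicit right-inverse computation $q\circ\hat S=\mathrm{Id}_Y$ and the quantitative bound on $\|x_\gamma\|$ via openness of $\pi$ --- is sound and matches the intended argument.
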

		\begin{proof}  Suppose $Y$ has AQLP.  As above, $Y$ is isometric with a quotient space $\ell_1(\Gamma)/J$ for some index set $\Gamma$.  Let $X = \ell_1(\Gamma)$ and $J$ its subspace as given.  Let $S$ be the isometry between $Y$ and $\ell_1(\Gamma)$.  For $\hat{S}$ guaranteed by AQLP,  the operator $\hat{S} \circ S^{-1} \circ \pi$ is a projection onto a complemented subspace of $\ell_1(\Gamma)$.  The remainder of the proof follows as in the proof of Theorem \ref{ALP-up-to-isomorphism}.  
		\end{proof}
		
		Therefore the class of Banach spaces with the ALP coincides with that of the AQLP. 
 
\begin{thm}\label{b} Let $W$ be a Banach space and let $W_1$ be a  complemented subspace of $W$. If $W$ has the AQLP  then $W_1$ has the AQLP. 
	\begin{proof} Let $X$ be a Banach space and $J$ be a closed subspace of $X$.
		Since $W_1$ is complemented in $W$, there exists a projection $P$ from $W$ onto $W_1$. Given $S\in L(W_1, X/J)$ and since $W$ has the AQLP, there exists $\widetilde{S\circ P}$, a lift of $S\circ P$, such that $\pi \circ \widetilde{S\circ P}= S\circ P$. Then $\widetilde{S\circ P}|_{W_1}$ is a lift for $S$. This concludes the proof. 
		\end{proof}
\end{thm}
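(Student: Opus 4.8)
The plan is a direct diagram chase built on the projection onto $W_1$. Fix a Banach space $X$, a closed subspace $J \subseteq X$, and a bounded operator $S \in \L(W_1, X/J)$; the goal is to produce $\hat S \in \L(W_1, X)$ with $\pi \circ \hat S = S$, where $\pi \colon X \to X/J$ is the quotient map.

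Since $W_1$ is complemented in $W$, fix a bounded projection $P \colon W \to W_1$, and write $\iota \colon W_1 \to W$ for the inclusion, so that $P \circ \iota = \mathrm{Id}_{W_1}$. First I would form the composition $S \circ P \in \L(W, X/J)$, a bounded operator from $W$ into a quotient space. Applying the AQLP of $W$ to $S \circ P$ yields a lift $T \in \L(W, X)$ with $\pi \circ T = S \circ P$. Then I would set $\hat S := T \circ \iota = T|_{W_1}$, which is bounded as a composition of bounded maps, and verify the lifting identity $\pi \circ \hat S = \pi \circ T \circ \iota = (S \circ P) \circ \iota = S \circ (P \circ \iota) = S$. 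This gives the AQLP for $W_1$.

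There is essentially no obstacle here beyond bookkeeping: the AQLP does not require norm equality between $S$ and its lift, so no estimate relating $\|\hat S\|$ to $\|S\|$ is needed, and every map involved is bounded. I would only remark that this argument is specific to the \emph{alternate} version: the restriction $T|_{W_1}$ need not be norm preserving even if $T$ is, so the same reasoning does not transfer to the norm-preserving QLP.
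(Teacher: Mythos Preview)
Your proof is correct and follows exactly the same route as the paper: extend $S$ to $S\circ P$ on $W$ via the projection, lift using the AQLP of $W$, and restrict back to $W_1$. Your write-up is in fact slightly more explicit (spelling out $P\circ\iota=\mathrm{Id}_{W_1}$ in the verification), and your closing remark about why this argument does not transfer to the norm-preserving QLP is a useful observation not made in the paper.
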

Theorems  \ref{a}  and \ref{b} imply  that every separable infinite dimensional complemented subspace of a space with the AQLP (or ALP) is isomorphic to $\ell_1$.  We invoke the main theorem in \cite{PC} to conclude that  $C(\Omega, E)$ (with $\Omega$ an infinite compact  Hausdorff space and $E$ an infinite dimensional Banach space) does not satisfy the AQLP. In particular,  $C(\Omega, C(\Omega))$; $C(K_1 \times K_2)$,  where $K_1$ and $K_2$ are infinite compact Hausdorff spaces, do not satisfy the AQLP, since each of these spaces contain a complemented copy of $c_0$.  See \cite{PC}.

\section{Proximinality and the Quotient  Lifting Property}
In this section we  start with  the definition for a pair of Banach spaces to have the QLP. The authors are grateful to T.S.S.R.K.Rao for mentioning a property considered in \cite{hww} that lead to this definition.
\begin{Def} \label{(Quotient Lifting Property)} Let $X$ be a Banach space and $J$ be a closed subspace of  $X$. The pair $(X,J)$ has the QLP if and only if for every Banach space $Y$ and every  bounded operator  $S: Y \to X/J$ there exists  a bounded operator $T$ from $Y$ to $X$ lifting $S$ while preserving the norm, i.e. $\|T\|=\|S\|$ and $\pi \circ T = S$.
	\end{Def}
Given $S$, as in the Definition \ref{(Quotient Lifting Property)},  there may exist several  liftings, i.e. bounded operators $T$ such that $S=\pi \circ T$.  For each such lifting, $T$, we have  $\|S\|\leq \|T\|.$  It is easy to construct examples where the inequality is strict. 


 \begin{thm} \label{proximinality-as a necessary for qlp} Let $X$ be a Banach space and $J$ a closed subspace of $X$. Then
 	\begin{enumerate}
 	 \item[(i)]  If $(X,J)$ has the QLP then $J$ is proximinal in $X$.
 	 \item[(ii)]\label{t4}  If $J$ is proximinal in $X$ and $P_J$ has a linear selection, then $(X,J)$ has the QLP.
 \end{enumerate}
\end{thm}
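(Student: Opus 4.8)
The plan is to handle (i) by testing the QLP on one canonical operator, and (ii) by converting the linear selection into a norm-one linear right inverse of the quotient map $\pi$ and then composing it with $S$.

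For (i), I would assume $(X,J)$ has the QLP (and $J\neq X$, the case $J=X$ being trivial) and apply the definition of the QLP to $Y=X/J$ and $S=\operatorname{id}_{X/J}$, noting $\|S\|=1$. This produces $T\in\mathcal{L}(X/J,X)$ with $\pi\circ T=\operatorname{id}_{X/J}$ and $\|T\|=1$. For a fixed $x\in X$, set $x_0:=T(x+J)$; then $\pi(x_0)=x+J$, so $x_0$ is a representative of $x+J$, giving $\operatorname{dist}(x,J)=\operatorname{dist}(x_0,J)\le\|x_0\|\le\|T\|\,\|x+J\|_{X/J}=\operatorname{dist}(x,J)$. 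Hence $j:=x-x_0\in J$ satisfies $\|x-j\|=\|x_0\|=\operatorname{dist}(x,J)$, i.e.\ $j\in P_J(x)$, so $J$ is proximinal.

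For (ii), let $p\colon X\to J$ be a linear selection for $P_J$. The preliminary step I would establish is that $p$ is automatically a bounded linear projection of $X$ onto $J$ with $\|\operatorname{id}_X-p\|\le 1$: for $j\in J$ one has $\operatorname{dist}(j,J)=0$, forcing $P_J(j)=\{j\}$ and hence $p(j)=j$, which with $p(X)\subseteq J$ and linearity gives $p\circ p=p$; moreover $\|(\operatorname{id}_X-p)(x)\|=\|x-p(x)\|=\operatorname{dist}(x,J)\le\|x\|$. Setting $q:=\operatorname{id}_X-p$, we get a linear projection with $\ker q=J$, so the rule $\sigma(x+J):=q(x)$ is a well-defined linear map $\sigma\colon X/J\to X$ with $\pi\circ\sigma=\operatorname{id}_{X/J}$ and $\|\sigma(x+J)\|=\operatorname{dist}(x,J)=\|x+J\|_{X/J}$, whence $\|\sigma\|\le 1$. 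Then for any Banach space $Y$ and any $S\in\mathcal{L}(Y,X/J)$, put $T:=\sigma\circ S$: one has $\pi\circ T=(\pi\circ\sigma)\circ S=S$ and $\|T\|\le\|\sigma\|\,\|S\|\le\|S\|$, while $\|S\|=\|\pi\circ T\|\le\|\pi\|\,\|T\|\le\|T\|$ forces $\|T\|=\|S\|$, so $(X,J)$ has the QLP.

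I do not expect a serious obstacle here; both directions are short once the right objects are identified. The one point that repays care is the observation in (ii) that a \emph{linear} selection is automatically a projection onto $J$ with $\|\operatorname{id}_X-p\|\le 1$ — this is precisely what makes $\sigma$ well defined on cosets and a contraction, hence delivers the norm-preserving lift. It is also worth noting that the operator $T$ produced in (i) runs the construction backwards: $\operatorname{id}_X-T\circ\pi$ is a linear selection for $P_J$, so for a pair $(X,J)$ the QLP, proximinality together with a linear selection of $P_J$, and the existence of a norm-one linear right inverse of $\pi$ are all equivalent, anticipating the discussion in Section 4.
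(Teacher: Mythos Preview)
Your proof is correct. Part (ii) is essentially the paper's argument: the paper defines $\psi(x+J)=x-p(x)$ and sets $\tilde S=\psi\circ S$, which is exactly your $\sigma$ and $T=\sigma\circ S$, though you supply more detail (that $p$ is automatically a projection onto $J$ with $\|\operatorname{id}_X-p\|\le1$, and hence that $\sigma$ is well defined and an isometry).

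For part (i) you take a different route. The paper argues by contraposition: if $J$ is not proximinal, choose a norm-one $x$ whose distance to $J$ is not attained, let $Y=\operatorname{span}\{x\}$ and $S(x)=x+J$; any lift $T$ must satisfy $T(x)=x+j$ for some $j\in J$, so $\|T\|\ge\|x+j\|>\operatorname{dist}(x,J)=\|S\|$, violating the QLP. You instead lift the identity on $X/J$ directly and read off the best approximation from the resulting norm-one right inverse of $\pi$. The paper's argument is marginally more economical (a one-dimensional test space suffices), while yours is more informative: the operator $T$ you produce already gives the linear selection $\operatorname{id}_X-T\circ\pi$, which is exactly what the paper proves later as a separate proposition and then packages into the equivalence you anticipate in your closing remark.
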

\begin{proof}
If $J$ is not proximinal in $X$, then there must exist a norm one element $x\in X$ such that $dist(x, J) < \|x-j\|$, for every $j \in J$. 
 We define $Y=span \{x\}$. Let $S: Y \rightarrow X/J$ be such that $S(x) = x+J$ then $\|S\|= dist(x, J)$. Every bounded operator  $T: Y \rightarrow X$ such that $\pi \circ T=S$,  satisfies $T(x) = x+j$, for some $j \in J$. Then $\|S\|<\|x+j\|\leq \|T\|.$  This proves (i).
 \par
For (ii), suppose $P_J$ admits a linear selection $p:X\to J$ and let $\psi : X/J \to X$ be defined by $x + J\mapsto x-p(x)$.  To see that  $\psi$ is well defined, we suppose that  $x_1 +J= x_2+J$. Then $x_2 = x_1+j$ for some $j\in J$, and by the linearity of $p$, we can write \[\psi(x_2 + J) = x_2-p(x_2) = x_1+j-p(x_1)-j=x_1-p(x_1)=\psi(x_1 + J) .\]  Thus $\psi$ is a linear isometry. For a bounded operator $S: Y \to X/J$,  $\tilde{S} = \psi\circ S$ provides the lifting as desired.
\end{proof}
 \begin{cor}
If $ J$  is an M-summand in X, then (X,J) has QLP .
\end{cor}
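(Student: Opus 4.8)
The plan is to deduce this directly from part (ii) of Theorem~\ref{proximinality-as a necessary for qlp}: it suffices to check that an $M$-summand $J$ is proximinal in $X$ and that the metric projection $P_J$ admits a linear selection. Recall that $J$ being an $M$-summand means there is a closed subspace $J^c$ of $X$ with $X = J \oplus J^c$ and $\|j + j^c\| = \max\{\|j\|, \|j^c\|\}$ for all $j \in J$, $j^c \in J^c$. Let $p : X \to J$ be the associated linear projection along $J^c$, so that $p(j + j^c) = j$; note $\|p(x)\| = \|j\| \le \max\{\|j\|,\|j^c\|\} = \|x\|$, so $p$ is a bounded (indeed norm $\le 1$) linear map into $J$.

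Next I would verify that $p$ is in fact a selection for $P_J$. Writing $x = j + j^c$, for any $k \in J$ we have $\|x - k\| = \|(j - k) + j^c\| = \max\{\|j - k\|, \|j^c\|\} \ge \|j^c\|$, with equality attained at $k = j = p(x)$. Hence $dist(x, J) = \|j^c\|$, the infimum is attained (so $J$ is proximinal), and $p(x) \in P_J(x)$ for every $x \in X$. Since $p$ is linear, it is a linear selection for $P_J$.

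With proximinality and a linear selection in hand, Theorem~\ref{proximinality-as a necessary for qlp}(ii) applies verbatim and gives that $(X,J)$ has the QLP; concretely, one takes $\psi : X/J \to X$, $\psi(x + J) = x - p(x) = j^c$, which is a linear isometric lift of the identity on $X/J$, and then $\psi \circ S$ lifts any bounded $S : Y \to X/J$ with the same norm. I do not expect any real obstacle here: the only point requiring thought is that the canonical $M$-norm decomposition makes the projection onto $J$ automatically a metric selection, which is immediate from the $\max$-formula for the norm. It is perhaps worth remarking that the metric complement $J_0 = \{j + j^c : \|j\| \le \|j^c\|\}$ need not be a subspace, so the sufficient condition from Section~3 does not directly apply, whereas the linear-selection route does.
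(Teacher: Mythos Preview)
Your proof is correct and follows exactly the same approach as the paper: both argue that the $M$-projection $p$ onto $J$ is a linear selection for $P_J$ (hence $J$ is proximinal), and then invoke Theorem~\ref{proximinality-as a necessary for qlp}(ii). Your version simply spells out the verification that $p(x)\in P_J(x)$ via the $\max$-norm formula, which the paper leaves implicit.
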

\begin{proof}

If $ J$  is an M-summand in X, then J is proximinal and has a linear (continuous) selection, namely, the M-projection P.  Therefore the statement.
\end{proof}

 David Yost in \cite{DY} proved that a Banach space is reflexive if and only if  every closed subspace  is proximinal. Hence every nonreflexive space must contain a nonproximinal  closed subspace and therefore this  pair does not have the QLP.
 
 We now give an example of a pair of spaces for which the proximinality condition holds but the QLP does not hold.  First, we  recall the notion of total subset of a Banach space. A subset $F$ of the dual space $X^{*}$ of a Banach space $X$ is said to be total if $f(x) =0$ for each $f \in F$ implies that $x=0$. 
\begin{ex}\label{5} We consider the pair  $(\ell_{\infty},c_0)$. From \cite{hww} we know $c_0$ is an $M$-ideal, hence a proximinal subspace of $\ell_{\infty}$. 
We show that the pair $(\ell_{\infty},c_0)$ does not have the QLP. To see this,  we observe that there is no injective bounded linear map from $\ell_{\infty} / c_0$ to  $\ell_{\infty}$.  
We assume the existence of   an injective bounded linear map $\phi$ from $\ell^{\infty} / c_0$ to  $\ell_{\infty}$.
 Since $\left( \ell_{\infty}\right)^*$ contains a countable total set,  $\{\tau_i : \tau_i(e_j)=\delta_{ij}\}$, 
  then $\phi^* \tau_i$ is a total set for $(\ell_{\infty}/c_0)^*$. Indeed, for $z\in \ell_{\infty}$ such that  $\phi^* \tau_i (z)=0$ for all i, then $\phi z=0$.  Since $\phi$ is injective, then  $z=0$.
   Arterbaum and Whitley in \cite{W} showed that $(\ell_{\infty}/c_0)^*$ has no countable total subset. 
   Therefore, the  identity map from $\ell_{\infty} / c_0$ to $\ell_{\infty} / c_0$ has no lift.
    If such a  lift existed, it would be an injective bounded linear map. This is impossible and it shows that the pair   $(\ell_{\infty} , c_0)$ does not have the QLP.
\end{ex} 
 We note  that the quotient space $\ell_{\infty}/c_0$ does not have  the ALP. To establish this take $S$ to be the  identity map from $\ell_{\infty}/c_0$ to $\ell_{\infty}/c_0$ and $\psi$ to be the quotient map from $\ell_{\infty}$ to $\ell_{\infty}/c_0$. 
 If there exists $\hat{S}$ from $\ell_{\infty}/c_0$ to $\ell_{\infty}$ such that $S=\psi\circ\hat{S}$, then $\hat{S}$  is an injective map but  no such  map exists. Theorem \ref{ALP-up-to-isomorphism}  implies that $\ell_{\infty}/c_0$  is not isomorphic to a $\ell_1(  \Gamma ),$ for any index set  $\Gamma$.

For completeness of exposition we include the following theorems  from  \cite{D} and from \cite{CW} to  be used  later.
\begin{thm}\label{4} (See Deutsch, \cite{D}) Let $J$ be a proximinal subspace of a normed linear space $X$. Then  the following are equivalent
	\begin{enumerate}
		\item $P_J$ has a linear selection.
		\item $J_0,$ the metric complement of $J$, contains a closed subspace $J_1$ such that $X = J\oplus J_1$.
				\end{enumerate}
	\end{thm}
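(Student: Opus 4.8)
The plan is to prove the two implications of Theorem~\ref{4} separately, the unifying observation being that a linear selection for $P_J$ is nothing other than a bounded linear projection of $X$ onto $J$ whose kernel is contained in the metric complement $J_0$; once this dictionary is in place, both directions become essentially formal.

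Assume first that $(1)$ holds, with $p\colon X\to J$ a linear selection. The first thing I would record is that $p$ is automatically bounded: since $0\in J$, every $x$ satisfies $\|x-p(x)\|=dist(x,J)\le\|x\|$, so $\|I-p\|\le 1$ and therefore $\|p\|\le 2$. Next, because $dist(j,J)=0$ for $j\in J$ we have $P_J(j)=\{j\}$, so $p$ acts as the identity on $J$; combined with $p(X)\subseteq J$ this gives $p^2=p$, i.e.\ $p$ is a bounded linear projection of $X$ onto $J$. Taking $J_1=\ker p$ then yields a closed subspace with $X=J\oplus J_1$, and for $x\in J_1$ the chain $\|x\|=\|x-p(x)\|=dist(x,J)$ shows $J_1\subseteq J_0$, which is $(2)$.

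For the converse, suppose $(2)$ holds: there is a closed subspace $J_1\subseteq J_0$ with $X=J\oplus J_1$. I would let $p$ be the linear projection of $X$ onto $J$ along $J_1$ and check directly that it is a selection. Writing a generic $x$ as $x=j+x_1$ with $j\in J$ and $x_1\in J_1$, one has $x-p(x)=x_1$; since adding an element of $J$ does not change the distance to $J$ we get $dist(x,J)=dist(x_1,J)$, and since $x_1\in J_0$ this equals $\|x_1\|$. Hence $\|x-p(x)\|=dist(x,J)$, so $p(x)\in P_J(x)$ and $p$ is the desired linear selection; its linearity is immediate from the direct-sum structure.

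The main point---and the only place calling for a moment's thought---is the automatic boundedness of a linear selection, which is exactly what makes the kernel of the associated projection a \emph{closed} subspace and hence a legitimate witness for $(2)$; beyond that, the argument is pure bookkeeping with the direct-sum decomposition and invokes no completeness, Hahn--Banach, or open-mapping input. In particular the equivalence holds verbatim for normed, not necessarily complete, $X$.
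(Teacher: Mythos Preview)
Your proof is correct. The paper does not actually prove Theorem~\ref{4}; it is quoted from Deutsch~\cite{D} ``for completeness of exposition'' and used as a black box, so there is no in-paper argument to compare against. Your argument is the natural one (and is essentially Deutsch's): identify a linear selection with a bounded projection onto $J$ whose kernel lies in $J_0$, and conversely.
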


\begin{thm}\label{2}(See Cheney and Wulbert \cite{CW}) Let $J$ be a subspace of $X$. Then
	\begin{enumerate}
		\item $J$ is a proximinal subspace if and only if $X= J+J_0$.
		\item\label{3} $J$ is Chebyshev if and only if $X= J+J_0$ and the representation of each $x \in X$ as $x = j + j_0$, where $j\in J$ and $j_0 \in J_0$, is unique.
	\end{enumerate}
\end{thm}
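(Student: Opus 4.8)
The plan is to reduce both parts to a single elementary observation that makes the set of best approximations transparent in terms of $J_0$: for $x\in X$ and $j\in J$,
\[ j\in P_J(x) \iff x-j\in J_0. \]
This rests only on the translation invariance of the distance to a subspace, namely $dist(x-j,J)=dist(x,J)$ whenever $j\in J$, which holds because $J$ is linear (as $j'$ ranges over $J$, so does $j+j'$). Granting this, if $j\in P_J(x)$ then $\|x-j\|=dist(x,J)=dist(x-j,J)$, so $x-j\in J_0$; conversely, if $x-j\in J_0$ then $\|x-j\|=dist(x-j,J)=dist(x,J)$, so $j$ realizes the distance. I would state and prove this equivalence first, as it is used in every direction below.

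For part (1): if $J$ is proximinal, then for each $x$ I pick some $j\in P_J(x)$ and write $x=j+(x-j)$ with $x-j\in J_0$ by the observation, so $X=J+J_0$. Conversely, if $X=J+J_0$ and $x=j+j_0$ with $j\in J$ and $j_0\in J_0$, then $x-j=j_0\in J_0$, so the observation gives $j\in P_J(x)$, whence $P_J(x)\neq\emptyset$ and $J$ is proximinal. For part (2): if $J$ is Chebyshev it is in particular proximinal, so $X=J+J_0$ by (1); and if $x=j_1+j_0^{(1)}=j_2+j_0^{(2)}$ are two decompositions into an element of $J$ plus an element of $J_0$, the observation forces $j_1,j_2\in P_J(x)$, so $j_1=j_2$ by uniqueness of the best approximation, hence $j_0^{(1)}=j_0^{(2)}$. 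Conversely, if $X=J+J_0$ with unique decompositions, then $J$ is proximinal by (1), and whenever $j_1,j_2\in P_J(x)$ the observation gives $x-j_1,x-j_2\in J_0$, so $x=j_1+(x-j_1)=j_2+(x-j_2)$ are two admissible decompositions; uniqueness yields $j_1=j_2$, so $P_J(x)$ is a singleton and $J$ is Chebyshev.

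There is no serious obstacle here; the work is entirely in isolating the right lemma, after which both equivalences fall out. The one point that needs care is that $J$ must be a genuine linear subspace (not merely convex) for the translation-invariance identity $dist(x-j,J)=dist(x,J)$, and that $0\in J_0$ always, so the decompositions $x=j+(x-j)$ produced above really do place the second summand in $J_0$.
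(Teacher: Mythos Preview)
Your argument is correct: the translation identity $dist(x-j,J)=dist(x,J)$ for $j\in J$ gives exactly the equivalence $j\in P_J(x)\iff x-j\in J_0$, and both parts of the theorem follow from it in the way you describe. The paper does not supply a proof of this theorem at all; it is quoted from Cheney and Wulbert \cite{CW} for later use, so there is nothing to compare your route against. One small remark: your closing comment that ``$0\in J_0$ always'' is true but not what secures $x-j\in J_0$ in your decompositions---that is already delivered by your key lemma, so the sentence is slightly misplaced, though harmless.
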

Thus we have the following proposition.
\begin{prop}\label{J_0 is a subspace}Let $J$ be a  proximinal subspace of $X$. Then
	\begin{enumerate} \item[(i)] If $J_0$ is a subspace of $X$ then $(X,J)$ has the QLP. 
		\item[(ii)] \label{c7}(cf. \cite{hk}) $J_0$ is a subspace if and only if $J$ is Chebyshev and $P_J$ is linear.
	\end{enumerate}
\end{prop}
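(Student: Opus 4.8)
The plan is to handle both parts by leaning on two elementary facts about the metric complement. First, $J_0$ is always closed, being the zero set of the continuous function $x \mapsto \|x\| - dist(x,J)$. Second, $J \cap J_0 = \{0\}$, since $j \in J \cap J_0$ forces $\|j\| = dist(j,J) = 0$. Because $J$ is proximinal, Theorem \ref{2}(1) gives $X = J + J_0$; under the hypothesis that $J_0$ is a subspace this becomes an algebraic direct sum $X = J \oplus J_0$ with both summands closed (and, by the open mapping theorem, the associated projections are bounded).

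For (i), once we have the closed direct sum decomposition $X = J \oplus J_0$, we apply Theorem \ref{4} (Deutsch) with $J_1 = J_0$ to conclude that $P_J$ admits a linear selection, and then Theorem \ref{proximinality-as a necessary for qlp}(ii) yields the QLP for $(X,J)$. Equivalently, one checks by hand that the linear projection $p : X \to J$ along $J_0$ is a selection for $P_J$: writing $x = j + j_0$, we get $\|x - j\| = \|j_0\| = dist(j_0,J) = dist(x,J)$, so $p(x) = j \in P_J(x)$.

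For (ii), the forward implication uses the same decomposition: the representation $x = j + j_0$ is unique, since $j_1 + j_0' = j_2 + j_0''$ gives $j_1 - j_2 \in J \cap J_0 = \{0\}$, so $J$ is Chebyshev by Theorem \ref{2}(2), and the computation above shows $P_J$ coincides with the linear projection onto $J$ along $J_0$, hence is linear. For the converse, assume $J$ is Chebyshev with $P_J$ linear. The two key observations are that $x - P_J(x) \in J_0$ for every $x$ (because $dist(x - P_J(x), J) = dist(x,J) = \|x - P_J(x)\|$) and that $P_J(j_0) = 0$ for every $j_0 \in J_0$ (since $0$ is then a best approximation and $J$ is Chebyshev). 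Hence the linear map $q(x) = x - P_J(x)$ has range contained in $J_0$ and restricts to the identity on $J_0$, so $J_0 = q(X)$ is a linear subspace.

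I expect the only subtle point to be the bookkeeping that translates between the metric-projection language and the algebraic splitting — namely the facts $x - P_J(x) \in J_0$ and $P_J|_{J_0} = 0$ — together with the observation that $J_0$ is automatically closed, which is exactly what makes Deutsch's theorem and the open mapping theorem applicable. Beyond that, the argument is a direct appeal to Theorems \ref{proximinality-as a necessary for qlp}, \ref{4}, and \ref{2}.
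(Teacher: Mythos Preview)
Your proof is correct and follows essentially the same route as the paper: invoke Theorem~\ref{2} to get $X=J+J_0$, note $J\cap J_0=\{0\}$ so the sum is direct, apply Theorem~\ref{4} to obtain a linear selection, and then use Theorem~\ref{proximinality-as a necessary for qlp}(ii) for part~(i); for part~(ii) the paper likewise uses uniqueness of the decomposition for the Chebyshev claim and the identity $J_0=\{x-P_J(x):x\in X\}$ for the converse. Your write-up is slightly more detailed (you make explicit that $J_0$ is closed and that $P_J$ coincides with the algebraic projection along $J_0$), but the underlying argument is the same.
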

\begin{proof} From  Theorem \ref{2},  we have $X= J+J_0$. By assumption $J_0$ is a subspace of $X$, so  Theorem \ref{4} asserts that  $P_J$ has a linear selection. So (i) follows from Theorem \ref{t4}-(ii).
\par
 Since  $J$ is proximinal in $X$ and  $J_0$ is a subspace, Theorem \ref{2}-(1) implies  that $X=J+ J_0$. It is clear that $J\cap J_0=\{0\}$, then $X=J\oplus J_0$. The existence of a linear selection of the metric projection $P_J$ follows from Theorem \ref{4}. The uniqueness of the representation of any element in $X$ implies that $J$ is Chebyshev by Theorem \ref{2}-(2). Conversely, since $J_0=\{ x-P_J(x): \, x \in X\}$, the linearity of the metric projection implies $J_0$ is a subspace of $X$. 
\end{proof}
Next example shows that $J_0$ being a subspace is not a necessary condition for the QLP.
\begin{ex}(See example 2.7 of \cite{D})
	Let $X = \ell_{\infty}^{(2)}(\mathbb{R})$ and $J=span \{e_1\}$.   It is easy to see that $J_0$ is not a subspace. Just consider $(0,2)$ and $(1,-2)$, both in $J_0$ with sum  $(1,0) \notin J_0$.  We observe that  $p:X\rightarrow J$ given by  $p(x,y)=(x,0)$ is a linear selection for the metric projection, Theorem 2-(ii) implies that  $(\ell_{\infty}^{(2)}({\mathbb{R}}), J )$ has the QLP. 
\end{ex}

\section{On metric complement of subspaces for low dimensional Banach spaces}
In this section we start with a characterization of the  metric complements of subspaces of $\ell_{\infty}^{(2)}(\mathbb{R})$. First, we notice that for $J=\{0\}$ or $J = X$ then $J_0=X$ or $J_0=\{0\}$ respectively. 

\begin{prop}\label{1} Let $X = \ell^{(2)}_{\infty}(\mathbb{R})$ and let $J$ be a  non-trivial subspace of $X$. Then $J_0$ is a subspace of $X$ if and only if $J$ is generated by $(u,v)$ with $uv\neq 0$.
	\end{prop}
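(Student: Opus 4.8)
The plan is to compute the metric complement $J_0$ explicitly. Since $X=\ell^{(2)}_\infty(\mathbb R)$ is two-dimensional, a non-trivial subspace $J$ is one-dimensional, say $J=\operatorname{span}\{(u,v)\}$ with $(u,v)\neq(0,0)$; whether $uv\neq 0$ is independent of the chosen spanning vector. For $x=(a,b)\in X$ write $\varphi_x(t):=\|x-t(u,v)\|_\infty=\max\{\,|a-tu|,\ |b-tv|\,\}$, a convex, coercive, piecewise-linear function of $t$. Then $\operatorname{dist}(x,J)=\min_t\varphi_x(t)\le\varphi_x(0)=\|x\|$, so $x\in J_0$ if and only if $t=0$ is a minimizer of $\varphi_x$, which by convexity holds if and only if $0\in\partial\varphi_x(0)$, i.e.\ $\varphi_x'(0^-)\le 0\le\varphi_x'(0^+)$.

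First I would dispose of the case $uv=0$. By symmetry assume $v=0$, so $J=\operatorname{span}\{e_1\}$ and $\operatorname{dist}((a,b),J)=\inf_t\max\{|a-t|,|b|\}=|b|$; hence $(a,b)\in J_0$ exactly when $|a|\le|b|$, and this set is not closed under addition (the paper's example $(0,2),(1,-2)$ applies). So $J_0$ is not a subspace, which already gives the ``only if'' direction: if $J_0$ is a subspace then $uv\neq 0$. One can also reach this conclusion via Proposition \ref{J_0 is a subspace}(ii), since a coordinate axis is visibly not a Chebyshev subspace of $X$.

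For the converse, assume $uv\neq 0$ and set $D_\pm:=\operatorname{span}\{(1,\pm 1)\}$. Three short steps finish the proof. (i) If $|a|\neq|b|$, then for $t$ near $0$ the function $\varphi_x$ coincides with the strictly larger of $|a-tu|$, $|b-tv|$, which is differentiable at $0$ with nonzero derivative ($\pm u$ or $\pm v$); hence $t=0$ is not a minimizer, so $J_0\subseteq\{(a,b):|a|=|b|\}=D_+\cup D_-$. (ii) For $x=(s,s)\in D_+$ with $s\neq 0$, writing $\sigma=\operatorname{sgn}(s)$ one has $\varphi_x(t)=|s|+\max\{-\sigma ut,\,-\sigma vt\}$ near $0$, and the condition $\varphi_x'(0^-)\le 0\le\varphi_x'(0^+)$ simplifies to ``$\sigma u$ and $\sigma v$ have opposite signs'', i.e.\ $uv<0$. (iii) Symmetrically, $x=(s,-s)\in D_-$ lies in $J_0$ if and only if $uv>0$. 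Putting these together: $J_0=D_-$ when $uv>0$ and $J_0=D_+$ when $uv<0$; in either case $J_0$ is a one-dimensional subspace.

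The whole argument is elementary; the only delicate point is the sign bookkeeping in (ii)--(iii) and the degenerate sub-cases where a coordinate of $x$ vanishes. In the latter $\varphi_x$ has a genuine corner at $t=0$, so one must use the subdifferential $\partial\varphi_x(0)$ --- the segment joining the two one-sided slopes --- rather than an honest two-sided derivative; making the convexity observation once, at the outset, lets this be handled uniformly across all cases.
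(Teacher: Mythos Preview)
Your proof is correct and reaches the same conclusion as the paper --- namely that $J_0=\operatorname{span}\{(1,-1)\}$ when $uv>0$ and $J_0=\operatorname{span}\{(1,1)\}$ when $uv<0$ --- but the route is genuinely different. The paper normalizes $J=\operatorname{span}\{(1,a)\}$, computes $\operatorname{dist}((1,c),J)$ explicitly as the ordinate of the intersection of the graphs of $|1-t|$ and $|c-at|$ (with a figure and a case split on $\operatorname{sgn}(a)$ and on whether $c\gtrless a$), and then solves for which $c$ gives $\operatorname{dist}((1,c),J)=\|(1,c)\|_\infty$. You instead avoid computing the distance altogether: you observe that $x\in J_0$ is equivalent to $0\in\partial\varphi_x(0)$, which reduces everything to a sign check on the one-sided derivatives. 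This buys you a cleaner argument with less case analysis and no picture, and it makes transparent why the diagonals $D_\pm$ are the only candidates (step~(i)). The paper's computation, on the other hand, actually produces the value of $\operatorname{dist}(x,J)$, which your method does not; but that extra information is not needed for the proposition.

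One small quibble: your closing remark about ``degenerate sub-cases where a coordinate of $x$ vanishes'' giving $\varphi_x$ a corner at $t=0$ is not quite right. If, say, $a=0$ and $b\neq 0$, then near $t=0$ the term $|b-tv|$ dominates and $\varphi_x$ is smooth there; the corners in your argument arise precisely on $D_+\cup D_-$, i.e.\ when $|a|=|b|$, which is exactly where you invoke the subdifferential in steps~(ii)--(iii). This does not affect the validity of the proof, since your convexity/subdifferential framework handles all cases uniformly, but the sentence as written misidentifies where the nondifferentiability occurs.
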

	\begin{proof}
We claim that $J_0$ is homogeneous, i.e. given $(a,b) \in J_0$ and $\lambda$ a nonzero scalar, then $(\lambda a, \lambda b)\in J_0$. Towards this claim we just observe that 
\[ \inf_{j \in J} \|(\lambda a, \lambda b)-j\| =|\lambda | \inf_{j \in J} \|( a,  b)-\lambda^{-1} j\| =|\lambda | \|(a,b)\|= \|(\lambda a, \lambda b)\|. \]

 Let $J$ be a non-trivial subspace of $\ell^{(2)}_{\infty}(\mathbb{R})$ generated by $(u,v)$. If $uv = 0$, WLOG assume that $u\neq 0$ and $v = 0$, then $J=span (1,0)$. If  $J_0$ is a subspace we have   $w_0=(0,1)$ and $w_1=(1,1)$ belong to $J_0$ but  $w_1-w_0=(1,0)$ does not belong to $J_0$. This  shows that  $J_0$ is  not a subspace. If $uv \neq 0$, let $a=\frac{v}{u}$ then $J$ is generated by $(1,a)$. For $x_1 = (1,c)$\vspace{-2mm}
	\begin{center}
	$dist(x_1,J)= \inf_{t \in \mathbb{R}}\{\max \{ |1-t|,|c-at|\}\}$
	\end{center}\vspace{-3mm}

\begin{figure}[h]
	\centering
	\includegraphics[width=1\linewidth]{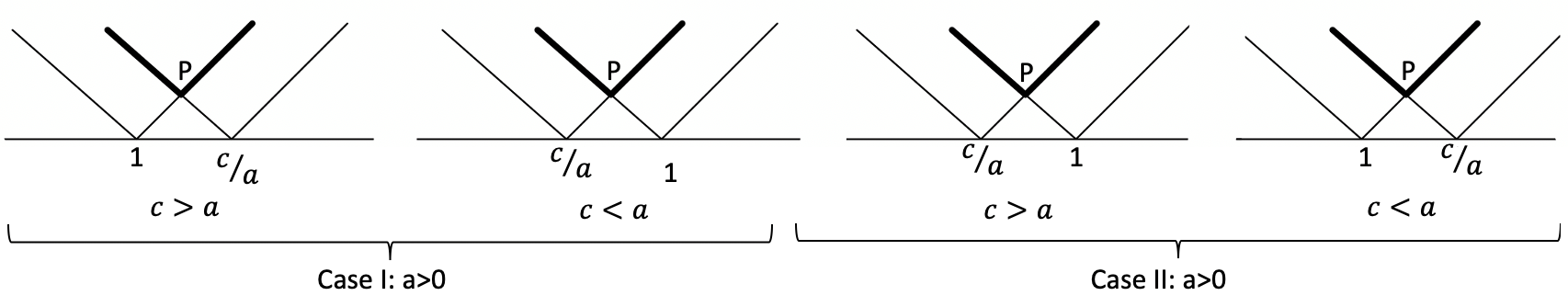}
\end{figure}

 The  graphs of $|1-t|$ and $|c-at|$ plotted in the figure above illustrate  the cases to be considered in the computation of $dist(x_1,J)$. In each case, the distance is equal to the second coordinate of  $P$. We now proceed with the computations. \\
	Case I: $a>0$\\
	If $c>a$, $dist(x_1,J)=\frac{c-a}{1+a}$ which is not equal to $\|x_1\|_\infty$ for any choice of $c$. If $c<a$, $dist(x_1,J)=\frac{a-c}{1+a}$ which is equal to $\|x_1\|_\infty$ only if $c = -1$. So $J_0$ is generated by $(1,-1)$ and is a subspace of $X$.\\
	Case II: $a<0$\\
     If $c>a$, $dist(x_1,J)=\frac{c-a}{1-a}$ which is equal to $\|x_1\|_\infty$ only if $c = 1$. If $c<a$, $dist(x_1,J)=\frac{a-c}{1-a}$ which is not equal to $\|x_1\|_\infty$ for any choice of $c $. So $J_0$ is generated by $(1,1)$ and is a subspace of $X$.
     
	\end{proof}

\begin{rem}\begin{enumerate}
	 \item Let $X$ be a Banach space and $J$  a proximinal subspace of codimension one. Then $(X,J)$ has the QLP. This follows from Corollary 2.8 in  \cite{D}.
	 \item If  $J$ is a closed subspace of a Hilbert space, $\mathcal{H}$, then the pair $(\mathcal{H}, J)$ has the QLP since the metric projection on a subspace of a Hilbert space is linear.
		\end{enumerate}
	\end{rem}

	The next result gives  a sufficient condition for  the existence of a linear selection of  the metric projection onto a proximinal subspace. 
\begin{prop}\label{QLP->LS}
	Let $X$ be a Banach space and let $J$ be a closed  subspace of $X$. If $(X,J)$ has the QLP then the metric projection $P_J$ has a  linear selection.
\end{prop}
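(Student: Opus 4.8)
The plan is to reverse the construction used in the proof of Theorem~\ref{proximinality-as a necessary for qlp}(ii): there a linear selection $p$ was turned into a norm-preserving lifting $\psi$; here I want to feed the QLP a single well-chosen test case and read a linear selection off the resulting lift. The test case is the ``universal'' one, exactly as in the proof of Theorem~\ref{ALP-up-to-isomorphism}: take the domain to be $X/J$ itself and the operator to be lifted to be the identity.

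Concretely, I would first dispose of the trivial case $J=X$ (then $\mathrm{dist}(x,J)=0$ and $P_J$ is the identity on $X=J$, a linear selection), so assume $J\neq X$ and hence $\|\mathrm{Id}_{X/J}\|=1$. Apply the QLP with $Y=X/J$ and $S=\mathrm{Id}_{X/J}\colon X/J\to X/J$; this produces a bounded operator $T\colon X/J\to X$ with $\pi\circ T=\mathrm{Id}_{X/J}$ and, crucially, $\|T\|=\|S\|=1$. Now define $p\colon X\to J$ by $p(x)=x-T(x+J)$. Since $\pi\bigl(T(x+J)\bigr)=x+J$ we get $x-T(x+J)\in J$, so $p$ indeed takes values in $J$; and $p$ is linear because $x\mapsto x+J$ and $T$ are linear. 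It remains to see that $p(x)\in P_J(x)$ for every $x$: writing $x-p(x)=T(x+J)$, on one hand $\|x-p(x)\|=\|T(x+J)\|\le\|T\|\,\|x+J\|_{X/J}=\mathrm{dist}(x,J)$, and on the other hand $x-p(x)\equiv x\pmod J$ forces $\|x-p(x)\|\ge\mathrm{dist}(x,J)$. Hence $\|x-p(x)\|=\mathrm{dist}(x,J)$, so $p$ is a linear selection for $P_J$ (and in passing this re-derives proximinality of $J$, consistent with Theorem~\ref{proximinality-as a necessary for qlp}(i)).

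There is no serious obstacle here; the one point worth flagging is that it is precisely the \emph{norm-preserving} clause in the definition of the QLP, $\|T\|=\|S\|=1$ rather than merely $\|T\|<\infty$, that upgrades the trivial algebraic fact $x-p(x)\in x+J$ into the metric statement that $x-p(x)$ is a \emph{best} approximation. A non-norm-preserving lift would only give an algebraic complement of $J$, i.e. a linear projection onto $J$, not one whose ``error'' $x-p(x)$ realizes the distance.
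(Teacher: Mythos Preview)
Your proof is correct and follows the same strategy as the paper's: apply the QLP to the identity on $X/J$ to obtain a norm-one lift $T$, then use the projection $T\circ\pi$ (equivalently, your $p=\mathrm{Id}-T\circ\pi$) to produce the selection. The only difference is in the endgame: the paper shows $\mathrm{Range}(T\circ\pi)\subset J_0$ and $\ker(T\circ\pi)=J$ and then invokes Deutsch's Theorem~\ref{4}, whereas you verify directly that $p(x)=x-T(x+J)$ realizes the distance, which is slightly more self-contained.
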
 
\begin{proof} Since $(X, J)$ has the quotient lifting property then there exists a bounded operator $\tilde{Id} :X/J \rightarrow X$ such that $Id =\pi \circ  \tilde{Id}$ and $\|\tilde{Id}\|=\|Id\|=1$. It is easy to check that $\tilde{Id} \circ \pi$ is a projection on $X$. For simplicity we denote this composition by $P$. Further, $X=Range (P) \oplus Ker (P)$ and $ker (P)=J$. It is clear that $J \subset ker (P)$, if $x \in Ker (P) $ then $\tilde{Id} (x+J)=0$ and $x+J=\pi \circ \tilde{Id} (x+J)=\pi (0)=J$. Hence $x \in J$.   We claim that $Range (P) \subset J_0$, the metric complement of $J$.  Towards this claim, we observe  that,  for $x \in Range (P)$, $P(x) = \tilde{Id} (x+J) =x$ and $\|x\| \leq \|x+J\|= dist (x, J)\leq \|x\|.$ Therefore $x \in J_0$. An application of the Theorem \ref{4} implies that the metric projection onto $J$ has a linear selection.
\end{proof} 

\begin{rem} \label{QP-eq-LS}
Given  $J$, a  proximinal subspace of a Banach space $X$,  and let $P_J$ denote the metric projection onto $J$. Then the theorems  \ref{proximinality-as a necessary for qlp} and \ref{QLP->LS} imply the equivalence of the following statements:  \begin{itemize}
\item $(X,J)$ has the  QLP.
\item   $P_J$ has a linear selection.
\end{itemize}
\end{rem} 
  
{
\begin{ex}

Let $\Omega$ be a compact Hausdorff space  and let $J$  be  the subspace of all  constant  functions in $C(\Omega)$.  The pair   $(C(\Omega), J)$ has  the QLP if and only if the cardinality of $\Omega$ is less or equal to  2. It is clear that $J$ is Chebyshev.

 If $\Omega$ has 3 distinct points, $a, b, c$, we 
consider three pairwise  disjoint open neighborhoods, $U_a$, $U_b$ and $U_c$ of $a$, $b$ and $c$,  respectively. Then there exist continuous functions $f_a$ and $f_b$ on $\Omega$ satisfying the conditions:    $f_a:\Omega \rightarrow [0, 1]$,  $f_a (a)=1$ and $f_a (x)=0,$ for all $x \notin U_a$; $f_b: \Omega \rightarrow  [-1,0]$,  $f_b(b)=-1$ and $f_b(x)=0$, for all $x \notin U_b$.  We set $f=f_a+f_b$. Similarly we define  $g=g_b+g_c$, with $g_b: \Omega \rightarrow  [-1,0]$,  $g_b(b)=-1$ and $g_b(x)=0$,  for all $x \notin U_b$;   $g_c : \Omega \rightarrow   [0,1]$,  $g_c (c)=1$  and $g_c (x)= 0, $ for all $x \notin U_c$. The constant function equal to $0$ is  closest to $f$ and $g$ but $-1/2$ is  closest  to $f+g$. This implies that $P_J$ is not linear and then $(C(\Omega), J)$ does not have the QLP by Remark \ref{QP-eq-LS}. The other implication follows from Proposition \ref{1} .  \end{ex}
}
This example  shows that  $X/J$ having the ALP  does not imply that the metric projection onto $J$ has a linear selection. The lift of the identity operator on $X/J$ may not have norm 1, in which case the argument given in the proof for the  Proposition \ref{QLP->LS} does not hold. 

We add a few remarks that follow straightforwardly from previous results and theorems in \cite{D}. \begin{itemize} 
\item  If $\Omega$ contains n isolated points, there is an n-dimensional subspace $J$ of $C_0(\Omega) $ for which $P_J $ admits a linear selection. Therefore  $(C_0(\Omega),J) $ has the  QLP. 
 \item  If $ J = span\{f\}$, then $(C_0(\Omega),J)$  has the QLP iff the  support of $ f$ contains at most 2 points.
\item If $f$ is in $L^1 (\Omega,\Sigma, \mu) = X,$ then for $ J = span\{f\}$, $(X,J)$ has  the QLP if support of $f$  is purely atomic and contains at most two atoms.  A similar statement holds for $L^p$, with $1<p<\infty$ and $p \neq 2$.   \end{itemize}

\noindent 	The next result addresses the problem of whether the existence  of a linear selection for a pair of spaces  transfers to other pairs of spaces defined from the given one.  We consider sequence spaces, spaces of vector valued continuous functions on a compact Hausdorff space $\Omega$, Lebesgue-Bochner integrable function spaces ($L_1(\mu, X)$) and space of $\mu$-measurable Pettis integrable functions ($\hat{P_1}(\mu, X)$). {	For the definitions and properties of these spaces we refer the reader to \cite{R}.
Given a Banach space $X$ we denote by $\mathcal{S} (X)$ any one of the following sequence spaces: $ \ell_{p}(X) $ ($1\leq p\leq \infty$), $c_0(X)$ or $c(X)$.

We denote by $\mathcal{F} (X)$ any one of the following function spaces: $  C(\Omega,X)$, with a compact Hausdorff space, or  $Lip (\Omega, X)$ , with $\Omega$ a compact metric space endowed with one of the standard norms. We denote by $\mathcal{I} (X)$ any one of the following function spaces:
$L_1(\mu,X)$ or $\hat{P_1}(\mu, X)$. Further, $L(Y,X)$ denotes the space of bounded operators from $Y$ into $X$.} \vspace{-.4mm}
{\begin{prop}\label{gs}
	Let $X$ and $Y$ be  Banach spaces and let $J$be  a proximinal subspace of $X$ such that the metric projection $P_J$ has a linear selection $p: X\to J$. Then the map $f\mapsto p\circ f$ from $\mathcal{S} (X)$ ($\mathcal{F} (X)$,  $\mathcal{I} (X)$ or $ L(Y,X)$) onto $\mathcal{S} (J)$ (resp. $\mathcal{F} (J)$, $\mathcal{I} (J)$,  $\mathcal{L} (Y, J)$) is a linear selection.
\end{prop}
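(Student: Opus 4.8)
The plan is to verify, for each of the four families of spaces $\mathcal{S}(X)$, $\mathcal{F}(X)$, $\mathcal{I}(X)$ and $L(Y,X)$ separately, that the induced map $\Phi: f\mapsto p\circ f$ is (a) well defined into the corresponding space built over $J$, (b) linear, and (c) a selection for the metric projection onto the natural copy of $\mathcal{S}(J)$ (resp. $\mathcal{F}(J)$, etc.) inside $\mathcal{S}(X)$. Linearity is immediate in every case from linearity of $p$, so the content is in (a) and (c). For (a) I would use that $p$ is bounded: since $p$ is a linear selection, for each $x$ one has $\|p(x)\|\le \|x\|+\mathrm{dist}(x,J)\le 2\|x\|$, so $p$ is a bounded operator; hence $p\circ f$ inherits the defining integrability/continuity/summability property of $f$ (e.g. $\|p\circ f(t)\|\le 2\|f(t)\|$ gives $p\circ f\in L_1(\mu,J)$ when $f\in L_1(\mu,X)$, and a similar pointwise domination handles $\ell_p(X)$, $c_0(X)$, $c(X)$; composition with a bounded operator preserves continuity for $C(\Omega,X)$ and Lipschitz-ness for $Lip(\Omega,X)$; and $f\in L(Y,X)$ gives $p\circ f\in L(Y,J)$ since a composition of bounded operators is bounded). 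For the Pettis case $\hat P_1(\mu,X)$ one uses that composing a Pettis integrable (scalarly measurable, with the right integrability) function with a bounded operator again yields a Pettis integrable function, which is a standard fact.

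The heart of the argument is (c). The key observation is that the norm on each of these spaces is computed "pointwise/coordinatewise" relative to the norm of $X$: for $\ell_p$ it is $\|(x_n)\|=(\sum\|x_n\|^p)^{1/p}$, for $C(\Omega,X)$ it is $\sup_{\omega}\|f(\omega)\|$, for $L_1(\mu,X)$ it is $\int\|f\|\,d\mu$, and for $L(Y,X)$ it is $\sup_{\|y\|\le1}\|f(y)\|$. Fix $F$ in the big space, say $F=(x_n)\in\ell_p(X)$, and let $G=(j_n)$ be any element of the subspace $\ell_p(J)$. Then
\[
\|F-G\|_{\ell_p(X)}^p=\sum_n\|x_n-j_n\|_X^p\ge\sum_n \mathrm{dist}(x_n,J)^p=\sum_n\|x_n-p(x_n)\|_X^p=\|F-\Phi(F)\|_{\ell_p(X)}^p,
\]
and since $\Phi(F)\in\ell_p(J)$, this shows $\Phi(F)\in P_{\ell_p(J)}(F)$; the same pointwise-domination computation works verbatim for $p=\infty$ (with $\sup$ in place of the sum), for $c_0(X)$ and $c(X)$ (one must check $p\circ f\in c_0(J)$ resp. $c(J)$, which follows since $\|p(x_n)\|\le 2\|x_n\|\to 0$, resp. $p(x_n)$ converges because $x_n$ does and $p$ is continuous), for $C(\Omega,X)$ and $Lip(\Omega,X)$ with $\sup$ over $\Omega$, for $L_1(\mu,X)$ and $\hat P_1(\mu,X)$ with $\int\cdot\,d\mu$, and for $L(Y,X)$ with $\sup$ over the unit ball of $Y$. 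In each case one also needs the reverse inequality $\|F-\Phi(F)\|\ge \mathrm{dist}(F,\text{subspace})$, but that is automatic since $\Phi(F)$ lies in the subspace; so in fact $\mathrm{dist}(F,\text{subspace})$ is attained and equals the pointwise-integrated distance, giving both proximinality of the subspace and the selection property simultaneously.

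The main obstacle, and the only place requiring genuine care, is the measurability and integrability bookkeeping in the $L_1(\mu,X)$ and especially the Pettis space $\hat P_1(\mu,X)$ cases: one must ensure that $p\circ f$ is strongly $\mu$-measurable (for $L_1$) or weakly $\mu$-measurable with the appropriate scalar-integrability (for $\hat P_1$) whenever $f$ is, and that the subspace $L_1(\mu,J)\subseteq L_1(\mu,X)$ (resp. $\hat P_1(\mu,J)\subseteq\hat P_1(\mu,X)$) is the correct object to take metric projections onto. Strong measurability of $p\circ f$ follows from strong measurability of $f$ together with continuity of $p$ (the composition of a continuous map with a strongly measurable function is strongly measurable), and the Pettis case is handled by the standard fact that a bounded operator maps Pettis integrable functions to Pettis integrable functions. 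Once these technical points are in place, the proof is simply the four instances of the pointwise-domination inequality above, and I would present it by treating $\mathcal{S}(X)$ in full detail and remarking that $\mathcal{F}(X)$, $\mathcal{I}(X)$ and $L(Y,X)$ are handled identically with $\sup$ or $\int$ replacing the $\ell_p$-sum.
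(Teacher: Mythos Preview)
Your proposal is correct and follows essentially the same approach as the paper: the paper's entire proof is the single line ``The proof follows easily from $\|(p(x_n))\|\leq 2\|(x_n)\|$,'' which is exactly your observation that $\|p(x)\|\le\|x-p(x)\|+\|x\|=\mathrm{dist}(x,J)+\|x\|\le 2\|x\|$ makes $p$ a bounded operator. You have simply fleshed out the pointwise-domination argument and the well-definedness checks that the paper leaves entirely to the reader.
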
} \vspace{-.7mm}
\begin{proof}
		The proof follows easily from $\|(p(x_n))\|\leq 2 \|(x_n)\|.$
		\end{proof}
Let $J$ be a  proximinal subspace of $X$ and  let $p$ be a linear selection. We observe that if $f:Y \rightarrow X$ is an isometric isomorphism, then $f^{-1}(J)$ is proximinal in $Y$, and $f^{-1}\circ p \circ f$ defines a linear selection from $Y$ to $f^{-1}(J).$  
	
 We denote by $X\hat{\otimes}_\pi Y$ the tensor product space of $X$ and $Y$ endowed with the projective norm and by $X\hat{\otimes}_\epsilon Y$ the tensor product space of $X$ and $Y$ endowed with the injective norm. We refer the readers to \cite{R} for more details. 
If $X$ and $Y$ are isometrically isomorphic we denote this by $X\cong Y$. The following identifications of tensor products are known:  $\ell_1\hat{\otimes}_\pi X\cong\ell_1(X)$; $\ell_1\hat{\otimes}_\epsilon  X\cong\ell_1[X]$,  the space of unconditionally summable sequences in $X$; $c_0\hat{\otimes}_\epsilon X\cong c_0(X)$; $C(\Omega)\hat{\otimes}_\epsilon X \cong C(\Omega,X)$, the space of continuous functions from $\Omega$ to $X$; $ L_1(\mu)\hat{\otimes}_\pi X\cong L_1(\mu,X)$ and $L_1(\mu)\hat{\otimes}_\epsilon X \cong \hat{P_1}(\mu,X)$. These facts, together with the above observation and Proposition \ref{gs}, establish the next corollary. 
\begin{cor}\label{tc}  If $X$ is a Banach space, $J$ is a closed proximinal subspace of $X$, and   $p:X\to J$ is a linear selection, then there is a linear selection from $\ell_1\hat{\otimes}_\pi X$ onto $\ell_1\hat{\otimes}_\pi J$, $\ell_1\hat{\otimes}_\epsilon X$ onto $\ell_1\hat{\otimes}_\epsilon J$, $c_0\hat{\otimes}_\epsilon X$ onto $c_0\hat{\otimes}_\epsilon J$, $C(\Omega)\hat{\otimes}_\epsilon X$ onto $C(\Omega)\hat{\otimes}_\epsilon J$, $ L_1(\mu)\hat{\otimes}_\pi X$ onto $ L_1(\mu)\hat{\otimes}_\pi J$, and $L_1(\mu)\hat{\otimes}_\epsilon X $ onto $L_1(\mu)\hat{\otimes}_\epsilon J. $
	\end{cor}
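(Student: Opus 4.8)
The plan is to reduce the corollary entirely to Proposition \ref{gs} together with the observation, made immediately above it, that the existence of a linear selection is transported by isometric isomorphisms; essentially no new estimate is needed beyond those already in hand. First I would record that each of the six spaces built from $X$ in the statement is, by the isometric identifications recalled just before the corollary, canonically isometrically isomorphic to one of the spaces $\mathcal{S}(X)$, $\mathcal{F}(X)$ or $\mathcal{I}(X)$ occurring in Proposition \ref{gs} — with the single exception of $\ell_1\hat{\otimes}_\epsilon X\cong\ell_1[X]$, treated separately below — and, crucially, that the corresponding space built from $J$ is carried by the \emph{same} canonical isomorphism onto the corresponding subspace built from $J$ (so $\ell_1\hat{\otimes}_\pi X\cong\ell_1(X)$ carries $\ell_1\hat{\otimes}_\pi J$ onto $\ell_1(J)$; $c_0\hat{\otimes}_\epsilon X\cong c_0(X)$ carries $c_0\hat{\otimes}_\epsilon J$ onto $c_0(J)$; $C(\Omega)\hat{\otimes}_\epsilon X\cong C(\Omega,X)$ carries $C(\Omega)\hat{\otimes}_\epsilon J$ onto $C(\Omega,J)$; $L_1(\mu)\hat{\otimes}_\pi X\cong L_1(\mu,X)$ carries $L_1(\mu)\hat{\otimes}_\pi J$ onto $L_1(\mu,J)$; and $L_1(\mu)\hat{\otimes}_\epsilon X\cong\hat{P_1}(\mu,X)$ carries $L_1(\mu)\hat{\otimes}_\epsilon J$ onto $\hat{P_1}(\mu,J)$).

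Granting this, the argument in each of these five cases is uniform. Writing $\Phi\colon Z(X)\to Z_0(X)$ for the canonical isometric isomorphism onto the relevant space of Proposition \ref{gs}, $\Phi$ restricts to an isometric isomorphism of the closed subspace $Z(J)$ onto $Z_0(J)$, which is proximinal in $Z_0(X)$ and for which $f\mapsto p\circ f$ is a linear selection by Proposition \ref{gs}. Conjugating this selection by $\Phi$ and invoking the isometric-invariance observation then yields a linear selection of the metric projection of $Z(X)$ onto $Z(J)$, which is what is wanted. This handles all cases except $\ell_1\hat{\otimes}_\epsilon X$.

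For that remaining case I would observe that the proof of Proposition \ref{gs} goes through verbatim for $\ell_1[X]$, the space of unconditionally summable sequences normed by $\|(x_n)\|=\sup_{\|f\|\le 1}\sum_n|f(x_n)|$: since $p$ is bounded linear with $\|p\|\le 2$, duality gives $\|(p(x_n))\|_{\ell_1[J]}\le\|p\|\,\|(x_n)\|_{\ell_1[X]}\le 2\|(x_n)\|_{\ell_1[X]}$, and $(p(x_n))$ still belongs coordinatewise to the best-approximation set of $(x_n)$; hence $f\mapsto p\circ f$ is again a linear selection, and the transfer argument of the previous paragraph applies once more.

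The only point that is not purely formal — and the place to be careful — is the claim that, under these canonical identifications, $Z(J)$ really sits \emph{isometrically} inside $Z(X)$ as the obvious subspace. For the injective tensor products this is immediate from the standard fact that $W\hat{\otimes}_\epsilon(\cdot)$ preserves isometric subspaces. For the two projective cases it is not automatic for arbitrary subspaces, so instead I would appeal to the naturality of the identifications $\ell_1\hat{\otimes}_\pi E\cong\ell_1(E)$ and $L_1(\mu)\hat{\otimes}_\pi E\cong L_1(\mu,E)$, valid for every Banach space $E$: applying them to $E=J$ and to $E=X$, and using the isometric inclusions $\ell_1(J)\subseteq\ell_1(X)$ and $L_1(\mu,J)\subseteq L_1(\mu,X)$, one sees that the canonical map $Z(J)\to Z(X)$ is identified with the isometric inclusion of a subspace, exactly as the argument above requires.
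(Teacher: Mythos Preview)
Your proposal is correct and takes essentially the same approach as the paper, whose entire argument for the corollary is the single sentence ``These facts, together with the above observation and Proposition \ref{gs}, establish the next corollary.'' You have in fact been more careful than the paper itself, explicitly flagging that $\ell_1[X]$ is not literally among the spaces $\mathcal{S}(X)$ of Proposition \ref{gs} and noting that the projective-tensor cases require the naturality of the identifications rather than a general subspace-preservation property of $\hat{\otimes}_\pi$.
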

\begin{rem} Using Theorem \ref{t4} we conclude that all the pairs listed  in Proposition \ref{gs} have the QLP.
	\end{rem}
\section{M-ideals and the quotient lifting property}

In this section we explore the QLP for pairs where the subspace is an M-ideal. Towards this we establish the existence of a linear selection for the metric projection.\par

In \cite{hww} (see p.59) it is mentioned that Ando, Choi and Effros proved that if $J$ is an $L_1$-predual M-ideal in Banach space $X$ and $Y$ is a separable Banach space, any bounded linear operator $T\in L(Y,X/J)$ has a bounded linear lifting $\tilde{T}\in L(Y,X)$ such that $\|T\|=\|\tilde{T}\|$ and $T=\pi \circ \tilde{T}$. 
 Our next result shows that this statement holds for $X=C(\Omega)$, where $\Omega$ is compact metrizable, any M-ideal $J$ and all Banach spaces $Y$. Therefore the pair $(C(\Omega), J)$ has the QLP. Let us recall a definition from \cite{Ando}. 
	\begin{Def}
		A Banach space $X$ is called a $\pi$-\textit{space} if there is a sequence ${F_n}$ of finite dimensional subspaces such that $F_1\subset F_2\subset F_3\dots$ with $\overline{\cup_{n=1}^{\infty} F_n}=X$ and each $F_n$ is the range of a projection of norm one.
		\end{Def}
	\begin{rem}\label{rem} Separable $L_p (1\leq p <\infty)$ and $C(\Omega)$ on compact metrizable $\Omega$ are $\pi$-spaces (See \cite{Ando}.)
		\end{rem}
\begin{prop}
The pair $(C(\Omega), J)$, where $\Omega$ is a compact metrizable space and $J$ is an M-ideal of $C(\Omega)$, has the QLP. 
	\end{prop}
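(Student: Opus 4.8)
The plan is to separate a completely trivial reduction from the only real work. For any closed subspace $J$ of a Banach space $X$ it suffices to lift the identity of the quotient: if there is $\psi\in L(X/J,X)$ with $\pi\circ\psi=Id$ and $\|\psi\|=1$, then $(X,J)$ has the QLP, since for an arbitrary Banach space $Y$ and $S\in L(Y,X/J)$ the operator $T:=\psi\circ S$ satisfies $\pi\circ T=S$ and $\|S\|\le\|T\|\le\|\psi\|\,\|S\|=\|S\|$, hence $\|T\|=\|S\|$. (Because an M-ideal is proximinal, Remark~\ref{QP-eq-LS} says the existence of such a $\psi$ is in turn equivalent to $P_J$ admitting a linear selection, namely $f\mapsto f-\psi(\pi f)$.) So everything reduces to producing one operator $\psi$ when $X=C(\Omega)$ and $J$ is an M-ideal, and here the hypothesis on $\Omega$ already enters: since $\Omega$ is compact metrizable, $C(\Omega)$ is separable and hence so is $C(\Omega)/J$.

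To build $\psi$ I would use the description of the M-ideals of $C(\Omega)$: they are precisely the closed ideals $J_D=\{f\in C(\Omega):f|_D=0\}$ with $D\subseteq\Omega$ closed (see \cite{hww}). Restriction to $D$ induces an isometric isomorphism $r\colon C(\Omega)/J_D\to C(D)$, so it is enough to find a norm-one linear extension operator $E\colon C(D)\to C(\Omega)$, i.e. a linear $E$ with $E(g)|_D=g$ and $\|E(g)\|_\infty=\|g\|_\infty$, and then set $\psi:=E\circ r$. Since $\Omega$ is metrizable and $D$ is closed, the Dugundji extension theorem furnishes exactly such an $E$: its extension operator is linear, restricts to the identity on $D$, and carries each $g$ into the closed convex hull of $g(D)$, which forces $\|E(g)\|_\infty=\|g\|_\infty$. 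Alternatively, staying closer to the framework built up just before the proposition, one observes that $J_D$ is isometric to $C_0(\Omega\setminus D)$, hence an $L_1$-predual, so the Ando--Choi--Effros theorem applies to the M-ideal $J$ in $X=C(\Omega)$; applied to the separable space $Y=C(\Omega)/J$ and the identity operator it yields the norm-one lift $\psi$ at once. In this second route the metrizability of $\Omega$ is what makes $C(\Omega)$ a $\pi$-space (Remark~\ref{rem}), the structural feature that the Ando--Choi--Effros argument rests on.

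The main obstacle, and the only genuinely nonroutine point, is the construction of $\psi$ itself, equivalently of a norm-one copy of $C(D)$ inside $C(\Omega)$ that is metrically complementary to $J$. Along the first route this is entirely absorbed into the classical Dugundji theorem, so what remains are routine checks: that $r$ is an isometric isomorphism (Tietze extension together with truncation) and that $J$ is proximinal (automatic for M-ideals). Along the second route the difficulty is the usual one for M-ideal lifting theorems: one runs a successive-approximation scheme along the finite-dimensional stages $F_1\subset F_2\subset\cdots$ of the $\pi$-space $C(\Omega)$, using at each stage that an M-ideal restricted to a finite-dimensional subspace behaves like an M-summand, and one must keep the norms under control so that the limiting operator has norm exactly one. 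Once $\psi$ is in hand, the reduction of the first paragraph delivers the QLP of $(C(\Omega),J)$ for every Banach space $Y$.
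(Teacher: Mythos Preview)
Your proposal is correct and in fact offers two valid routes, neither of which is exactly the paper's. The paper identifies $C(\Omega)/J\cong C(D)$, observes that $C(D)$ is itself a $\pi$-space (since $D$ is compact metrizable), and then invokes Theorem~5 of \cite{Ando} to obtain the norm-one lift $\phi$ of the identity; the final step $\tilde S=\phi\circ S$ is identical to your reduction.

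Your first route via Dugundji's extension theorem is genuinely different and more elementary: it bypasses the $\pi$-space machinery entirely in favor of a classical topology result, and metrizability enters in the most natural way (Dugundji requires it). This is arguably cleaner for this concrete setting. Your second route via Ando--Choi--Effros is also correct---$J\cong C_0(\Omega\setminus D)$ is an $L_1$-predual and $C(\Omega)/J$ is separable---but your closing sentence there is slightly off: Ando--Choi--Effros does not rest on the $\pi$-space structure of $C(\Omega)$; it uses only the $L_1$-predual hypothesis on $J$ and separability of $Y$. The $\pi$-space structure (of the \emph{quotient} $C(D)$, not of $C(\Omega)$) is what the paper's separate appeal to Ando's Theorem~5 uses. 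So all three arguments---yours twice and the paper's once---exploit metrizability differently: Dugundji on $\Omega$, separability of $C(\Omega)/J$, and the $\pi$-space property of $C(D)$, respectively.
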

\begin{proof}  
	Since $J$ is an M-ideal of $C(\Omega)$, then $J = \{f \in C(\Omega): f|_D \equiv 0 \}$ for some closed subset $D$ of $\Omega$.  By Remark \ref{rem}, $C(\Omega)$  is a $\pi$-space and $C(D)$ is isometrically isomorphic to $C(\Omega)/J$. Therefore $C(\Omega)/J$ is a $\pi$- space. By Theorem 5 of \cite{Ando} there exists a norm one linear map $\phi$ from $C(\Omega)/J$ to $C(\Omega)$ such that $\pi\circ \phi = Id$. For a Banach space $Y$ and bounded linear map $S : Y \to C(\Omega)/J$, $\tilde{S} : Y \to C(\Omega)$ defined by $\tilde{S}= \phi\circ S$ is a lifting such that $S=\pi \circ \tilde{S}$ and $\|S\| =\|\tilde{S}\|$. This concludes the proof.
	\end{proof}

Our next result shows on how to construct a linear selection onto an arbitrary M-ideal of the space of continuous functions on a particular class  of Euclidean domains.

\begin{prop}
	Let $J$ be an M-ideal of $C([0,1]^n)$, where $n$ is a positive integer. Then the metric projection $P_J$ has a linear selection. 
	\end{prop}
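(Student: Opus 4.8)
The plan is to identify the M-ideal concretely and then reduce the existence of a linear selection to the existence of a simultaneous, norm-preserving, linear extension operator $C(D)\to C([0,1]^n)$.

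First I would use, as in the preceding proof, that an M-ideal $J$ of $C([0,1]^n)$ has the form $J=J_D:=\{f\in C([0,1]^n):f|_D\equiv 0\}$ for a closed set $D\subseteq[0,1]^n$, and that $f+J\mapsto f|_D$ is an isometric isomorphism of $C([0,1]^n)/J$ onto $C(D)$; in particular $dist(f,J)=\|f|_D\|_\infty$ for every $f$. Consequently a map $p:C([0,1]^n)\to J$ is a linear selection for $P_J$ exactly when $p$ is linear, $p(f)|_D\equiv 0$, and $\|f-p(f)\|_\infty=\|f|_D\|_\infty$ for all $f$.

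Next I would construct a linear operator $E:C(D)\to C([0,1]^n)$ with $E(g)|_D=g$ and $\|E(g)\|_\infty=\|g\|_\infty$; this is the core of the argument. Since $[0,1]^n$ is a compact metric space and $D$ is closed, the open complement $U=[0,1]^n\setminus D$ carries a locally finite partition of unity $\{\phi_\alpha\}$ subordinate to a cover of $U$ by balls whose radii tend to $0$ as one approaches $D$; choosing for each $\alpha$ a point $d_\alpha\in D$ that is, up to a fixed multiplicative constant, nearest to the support of $\phi_\alpha$, one defines $E(g)=g$ on $D$ and $E(g)(x)=\sum_\alpha\phi_\alpha(x)\,g(d_\alpha)$ for $x\in U$ (this is the Borsuk--Dugundji extension, which may also simply be quoted). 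Linearity in $g$ is immediate from the formula; since each value $E(g)(x)$ is a convex combination of values of $g$, it lies in $[\min_D g,\max_D g]$, so $\|E(g)\|_\infty=\|g\|_\infty$; and continuity of $E(g)$ at the points of $D$ is the standard estimate in that construction.

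Finally I would put $p(f)=f-E(f|_D)$. Then $p$ is linear, $p(f)|_D=f|_D-f|_D\equiv 0$ so $p(f)\in J$, and $\|f-p(f)\|_\infty=\|E(f|_D)\|_\infty=\|f|_D\|_\infty=dist(f,J)$, so $p(f)\in P_J(f)$; hence $p$ is a linear selection. The step I expect to be the real obstacle is the construction (or invocation) of the simultaneous linear norm-preserving extension operator $E$ and, if one argues from scratch, the verification that the partition-of-unity formula produces a function continuous at the points of $D$; everything else is bookkeeping. (Since $[0,1]^n$ is compact metrizable, the statement also follows at once from the preceding proposition together with Remark \ref{QP-eq-LS}; the construction above is worth giving because it exhibits the selection explicitly.)
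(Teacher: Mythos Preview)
Your proof is correct, and it takes a genuinely different route from the paper's. The paper also begins with $J=J_D$ and the identity $dist(f,J)=\|f|_D\|_\infty$, but then builds the selection by hand: for $n=1$ it writes down an explicit piecewise-linear $f_1\in J$ (setting $f_1=0$ on $D$ and bridging the gaps of $[0,1]\setminus D$ by affine interpolation of the boundary values), and for $n\ge 2$ it repeats this one-dimensional recipe along each ray from the origin. Your argument instead recognizes that the whole issue is the existence of a simultaneous, linear, norm-one extension operator $E:C(D)\to C([0,1]^n)$ and invokes the Borsuk--Dugundji construction to produce it, then sets $p(f)=f-E(f|_D)$.

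What each approach buys: the paper's construction is elementary and self-contained, but it is only sketched, the continuity of the radial construction for $n\ge 2$ (especially at the origin and along rays tangent to $D$) is not verified, and the method is tied to the specific geometry of the cube. Your approach is cleaner and uniform in $n$; indeed it works for any compact metric $\Omega$, so it actually re-proves the preceding proposition on $(C(\Omega),J)$ for metrizable $\Omega$ while exhibiting the selection explicitly. Your parenthetical observation---that the present proposition already follows from the previous one together with Remark~\ref{QP-eq-LS}---is also a valid shortcut the paper does not take. The only place to be slightly careful is the norm equality $\|E(g)\|_\infty=\|g\|_\infty$: you correctly derive it from the convex-combination form of the Dugundji extension, which is exactly the point that makes $p$ a \emph{metric} selection rather than just a bounded projection onto $J$.
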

\begin{proof}Since $J$ is an M-ideal of $C([0,1]^n)$ there exists a closed subset $D$ of $[0, 1]^n$ such that $J = \{f \in C([0,1]^n): f|_D \equiv 0 \}$ (See \cite{hww} p.4.) 
{	We observe that $g \in J_0$ if and only if $\|g\|_{\infty}= dist (g, J) =\|g|_D\|_{\infty}.$  We present the proof for the cases: $n=1$ and $n=2$; other cases follow similarly. For $n=1$,  we first consider $D=[t_1,t_2]\cup [t_3,t_4].$ Let $f \in C[0,1]$, then we define $f_1$ as
	 \[(\star ) \,\,
	 f_1(x)=\begin{cases} f(x)-f(t_1\chi_{(-\infty,t_1)}+t_4\chi_{(t_4,\infty)}) & \text{for }x\notin [t_2,t_3] \\0 & \text{for } x\in D\\ f(x)-f(t_2)+\dfrac{x-t_2}{t_3-t_2}\left[f(t_2)-f(t_3)\right] &\text{for }x\in[t_2,t_3] \end{cases}
	 \]
	 The map $p: C([0,1])\to J$ defined by $f\mapsto f_1$ is a linear selection and $f-f_1 \in J_0$. For an arbitrary closed subset of $[0,1]$ we assign the value $0$ on $D$ and extend over the  open intervals in $[0,1]\setminus D$ as in $(\star )$.\par
	 \begin{figure}[h]
	 	\centering
	 	\includegraphics[width=0.3\linewidth]{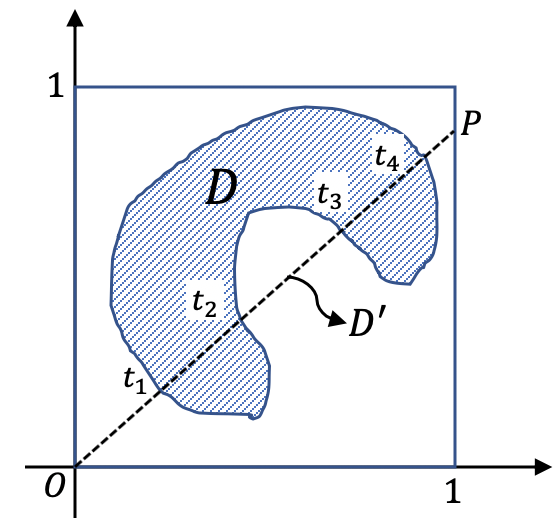}
	 	\caption{$D$ is a closed subset of $[0,1]^2$ and $D'=D\cap \overline{OP}$}
	 	\label{fig:pic-1}
	 \end{figure}
    Let $n=2$ and $f\in C([0,1]^2)$. For $x\in [0,1]^2$ we will define $f_1 = f_{1_m}$ along the line of slope $m$ passing through $0$ and $x$. Let $D'$ be the  intersection of $D$ with the line passing through $0$ and $x$, see Figure 1. For simplicity of notation, we assume that $D'$ has two connected components, i.e.  $D'=[t_1,\, t_2]\cup [t_3,t_4].$ Then, 
      \[
     f_{1_m}(x)=\begin{cases} f_1(x) &\text{for } \|x\|\notin [\|t_2\|,\|t_3\|] \\ f(x)-f(t_2)+\frac{\|x-t_2\|}{\|t_3-t_2\|}[f(t_2)-f(t_3)] &\text{for } \|x\|\in[\|t_2\|,\|t_3\|] \end{cases}
     \]
     The map $p: C([0,1]^2)\to J$ defined by $f\mapsto f_1$ is a linear selection.}
	\end{proof}
The same techniques are  unsuitable  for   general Hausdorff spaces.

It is well known that M-ideals are proximinal (see \cite{hww}, p.50). It is not the case that for every Banach space $X$ and an M-ideal $J$ in $X$,  the pair $(X,J)$ has the QLP,  as we have seen with  the pair $(\ell_{\infty},c_0)$. However the conclusion is different  for reflexive Banach spaces. 

Given a Banach space $B$, let $J_B$ denote the canonical isometric embedding of $B$ into its double dual, $B^{**}$.

\begin{prop}
	If $X$ is reflexive and $J$ is an M-ideal of $X$ then $(X, J)$ has the QLP.
\end{prop}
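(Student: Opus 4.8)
The plan is to show that the reflexivity hypothesis upgrades the M-ideal $J$ to a genuine M-summand of $X$; once this is done the conclusion is immediate, since by the corollary following Theorem~\ref{proximinality-as a necessary for qlp} (equivalently, by Remark~\ref{QP-eq-LS}) a pair $(X,J)$ with $J$ an M-summand has the QLP, the M-projection onto $J$ being a bounded linear selection for $P_J$.

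To prove that $J$ is an M-summand I would argue as follows. By definition, $J$ being an M-ideal of $X$ means precisely that $J^{\perp}$ is an L-summand of $X^{*}$, so there is a closed subspace $W\subseteq X^{*}$ with $X^{*}=J^{\perp}\oplus_{1}W$; let $P$ denote the associated L-projection of $X^{*}$ onto $J^{\perp}$. The structural input I would invoke is the standard L/M duality (see \cite{hww}, Chapter~1): the adjoint of an L-projection is an M-projection. Hence $P^{*}$ is an M-projection on $X^{**}$. Now reflexivity enters: identifying $X^{**}$ with $X$ via the canonical isometry $J_{X}$, the operator $Q:=J_{X}^{-1}\circ P^{*}\circ J_{X}$ is an M-projection on $X$. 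A routine annihilator computation identifies its kernel: $P^{*}\phi=0$ iff $\phi$ annihilates $\operatorname{ran}P=J^{\perp}$, i.e.\ iff $\phi\in J^{\perp\perp}$, and since $X$ is reflexive and $J$ is norm closed, $J_{X}^{-1}(J^{\perp\perp})=J$. Thus $\ker Q=J$, so $I-Q$ is an M-projection of $X$ with range $J$; equivalently $X=J\oplus_{\infty}(\operatorname{ran}Q)$, i.e.\ $J$ is an M-summand of $X$.

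The remainder is mechanical. Writing $\rho:=I-Q$ for the M-projection onto $J$, for each $x\in X$ we have $x=\rho x+Qx$ with $\rho x\in J$, $Qx\in\operatorname{ran}Q$, and $\|\rho x-j+Qx\|=\max\{\|\rho x-j\|,\|Qx\|\}$ for every $j\in J$; hence $dist(x,J)=\|Qx\|=\|x-\rho x\|$, so $\rho$ is a bounded linear selection for $P_{J}$. Theorem~\ref{t4} then gives that $(X,J)$ has the QLP.

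The step I expect to be the genuine obstacle --- and the only place the hypothesis is really used --- is the descent from an M-projection on the bidual to an M-projection on $X$ itself; this is exactly what reflexivity provides. Without it the argument collapses, in accordance with the pair $(\ell_{\infty},c_{0})$ above, where $c_{0}$ is an M-ideal but $(\ell_{\infty},c_{0})$ fails the QLP. The remaining ingredients --- that adjoints of L-projections are M-projections, and that $J^{\perp\perp}=J$ for a norm-closed subspace of a reflexive space --- are entirely standard and can be cited from \cite{hww}.
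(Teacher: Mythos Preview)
Your argument is correct. The key structural fact you prove --- that in a reflexive space every M-ideal is already an M-summand --- is standard (cf.\ \cite{hww}, Chapter~I), and your derivation of it via the adjoint of the L-projection and the identification $J_{X}^{-1}(J^{\perp\perp})=J$ is clean. Once $J$ is an M-summand, the corollary after Theorem~\ref{proximinality-as a necessary for qlp} finishes the job.

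The paper's own proof takes a different, more direct route. Rather than first establishing that $J$ is an M-summand and then invoking the corollary, it builds the lift of an arbitrary $S:Y\to X/J$ explicitly: with $P:X^{*}\to J^{\perp}$ the L-projection, it forms $S^{*}\circ P:X^{*}\to Y^{*}$, checks that this has norm $\|S\|$ (since $P$ is an L-projection), and then sets the lift to be $J_{X}^{-1}\circ (S^{*}\circ P)^{*}\circ J_{Y}:Y\to X$, verifying $\pi$ composed with this equals $S$ by a pointwise calculation. Both proofs hinge on the same two ingredients --- the L-decomposition $X^{*}=J^{\perp}\oplus_{1}J^{*}$ and reflexivity to descend from the bidual --- but you package them as a structural statement about $J$ and then quote earlier results, while the paper carries out the lift construction by hand for each $S$. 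Your version is more conceptual and arguably shorter; the paper's version has the minor advantage of exhibiting the lifting map explicitly without passing through the M-summand characterization.
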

\begin{proof}
	Let $Y$ be a Banach space and $S:Y \rightarrow X/J$, a bounded operator. We  denote by $\pi :X\rightarrow X/J$, the quotient map, we assume that $J\neq X$.  Since the dual of $X/J$ is identified with the annihilator of $J$, $J^{\perp}$, we have
	$S^*: J^{\perp} \rightarrow Y^*$. Let $P$ denote the projection
	 $  X^*=J^{\perp}\oplus_1 J^* \rightarrow J^{\perp}$. 
	 The composition $S^*\circ P: X^* \rightarrow Y^*$, is such that  $\|S^*\circ P\|=\|S\|$, and  $(S^*\circ P)^*: Y^{**} \rightarrow X^{**}=X$ 
	 also has the same norm as $S$. 
	 We now show that $\pi \circ (J^{-1}_X \circ (S^*\circ P)^* \circ J_Y )=S$.
	   Let $y \in Y$, $(S^*\circ P)^* (y^{**})=  J_X(x)$, and $\pi (x)= x+J$. Given $y \in Y$ 
	  and $\eta \in X^*$ then
	\[ (S^* \circ P)^* \circ J_Y (y) (\eta)= P(\eta) (S(y)).\]
	On the other hand, $S(y)=x+J$, identified with its image in $(X/J)^{**}$,
	$(x+J)^{**} (P(\eta))= P(\eta)(x+J)=P(\eta) (S(y))$. This shows that $\pi \circ (J^{-1}_X \circ (S^*\circ P)^* \circ J_Y )=S$ and completes the proof.
\end{proof}\vspace{-5mm}

\end{document}